\documentclass{amsart}

\usepackage[latin1]{inputenc}
\usepackage{amssymb,amsmath}
\usepackage[mathscr]{eucal}
\usepackage{enumerate}

\newtheorem{defi}{Definition}[section]
\newtheorem{teor}[defi]{Theorem}
\newtheorem{lema}[defi]{Lemma}
\newtheorem{nota}[defi]{Remark}
\newtheorem{coro}[defi]{Corollary}
\newtheorem{prop}[defi]{Proposition}

\newtheorem{ejem}[defi]{Example}

\newcommand{\e}{\varepsilon}

\newcommand{\M}{\mathcal{M}}
\newcommand{\MB}{\mathcal{BM}}
\newcommand{\MBR}{\mathcal{BM}_{(p_1,p_2,p_3)}(\R)}
\newcommand{\mul}{\mathcal{\tilde M}_{(p_1,p_2,p_3)}(\R)}

\newcommand{\C}{\mathbb{C}}

\newcommand{\N}{\mathbb{N}}
\newcommand{\R}{\mathbb R}

\newcommand{\ee}{\end{equation}}
\newcommand{\be}{\begin{equation}}
\newcommand{\ea}{\end{eqnarray*}}
\newcommand{\ba}{\begin{eqnarray*}}

\begin{document}

\def\dem{\noindent {\sf  Proof.\ \ }}
\def\qed{\hfill $\blacksquare$}

\textwidth 16cm  
\textheight 21cm \headsep 1.75cm \topmargin -1.5cm \oddsidemargin
0in \evensidemargin 0in

\title{Notes on the spaces of bilinear multipliers}

\author{Oscar Blasco}

\address{Department of Mathematics,
Universitat de Valencia, Burjassot 46100 (Valencia)
 Spain}
\email{oscar.blasco@uv.es}

\keywords{spaces of bilinear multipliers, bilinear Hilbert
transform, bilinear fractional transform}
\thanks{
Partially supported by  Proyecto MTM2008-04594/MTM} \maketitle

\begin{abstract} A   locally integrable function  $m(\xi,\eta)$ defined on $\R^n\times
\R^n$ is said to be a  bilinear multiplier on $\R^n$ of type
$(p_1,p_2, p_3)$ if
$$ B_m(f,g)(x)=\int_{\R^n} \int_{\R^n}\hat f(\xi)\hat g(\eta)m(\xi,\eta)e^{2\pi i(\langle\xi +\eta,x\rangle} d\xi d\eta$$
defines a bounded bilinear operator from $L^{p_1}(\R^n)\times
L^{p_2}(\R^n) $ to $L^{p_3}(\R^n)$. The  study of the basic
properties of such   spaces  is investigated and several methods
of constructing examples of bilinear multipliers are provided. The
special case where $m(\xi,\eta)= M(\xi-\eta)$ for a given $M$
defined on $\R^n$ is also addressed.
\end{abstract}

\section{Introduction.}

Throughout the paper $C_{00}(\R^n)$ denotes the space of
continuous functions defined in $\R^n$ with compact support,
${\mathcal S}(\R^n)$ denotes the Schwartz class on $\R^n$, i.e.
$f:\R^n\to \C$ such that $f\in C^\infty(\R^n)$ and $x^\alpha
\frac{\partial^{|\beta| }f(x)}{\partial x_1^{\beta_1}...\partial
x_n^{\beta_n}}$ is bounded for any $\beta=(\beta_1,...,\beta_n)$
and $\alpha=(\alpha_1,...,\alpha_n)$ where
$x^\alpha=x_1^{\alpha_1}...x_n^{\alpha_n}$ and $|\beta|=
\beta_1+...+\beta_n$ and ${\mathcal P}(\R^n)$ stands for the set
of functions in ${\mathcal S}(\R^n)$ such that $\hat f \in
C_{00}(\R^n)$ where  $\hat f(\xi)= \int_{\R^n} f(x) e^{-2\pi
i\langle x,\xi\rangle}dx$.

 We shall use the notation $\M_{p,q}(\R^n)$ (respect.
 $\tilde\M_{p,q}(\R^n)$), for
$1\le p,q\le \infty$, for the space of distributions $u\in
{\mathcal S}'(\R^n)$ such that $u*\phi \in L^q(\R^n)$ for all
$\phi\in L^p(\R^n)$ (respect. for the space of bounded functions
$m$ such that $T_m$ defines a bounded operator from $L^p(\R^n)$ to
$L^q(\R^n)$ where $\widehat{T_m (\phi)}(\xi)= m(\xi)\hat f(\xi)$.)
We endow the space $\tilde\M_{p,q}(\R^n)$ with the ``norm" of the
operator $T_m$, that is $\|m\|_{p,q}=\|T_m\|$.

Let us start off by mentioning some  well known properties of the
space of linear multipliers (see \cite{BL, SW}):  $\M_{p,q}(\R^n)=
\{0\}$ whenever $q<p$, $\M_{p,q}(\R^n)=\M_{q',p'}(\R^n)$ for
$1<p\le q<\infty$ and for $1\le p\le 2$, $\M_{1,1}(\R^n)\subset
\M_{p,p}(\R^n)\subset \M_{2,2}(\R^n).$ We also have the
identifications
$$\tilde\M_{2,2}(\R^n)=
L^\infty(\R^n),$$
$$\M_{1,q}(\R^n)=\{u\in
{\mathcal S}'(\R^n): u\in L^q(\R^n)\}, 1<q<\infty, $$
$$\M_{1,1}(\R^n)=\{u\in
{\mathcal S}'(\R^n): u= \mu \in M(\R^n)\}.$$

In this paper we shall be dealing with their bilinear analogues.
\begin{defi}  Let $1\le p_1,p_2\le\infty$ and $0<p_3\le \infty$ and let $m(\xi,\eta)$ be a locally integrable function on $\R^n\times \R^n$. Define
$$ B_m(f,g)(x)=\int_{\R^n} \int_{\R^n}\hat f(\xi)\hat g(\eta)m(\xi,\eta)e^{2\pi i(\langle\xi +\eta,x\rangle} d\xi d\eta$$
for $f, g\in {\mathcal P}(\R^n)$.

  $m$ is said to be a {\it bilinear
multiplier} on $\R^n$ of type $(p_1,p_2, p_3)$    if  there exists
$C>0$ such that $$\|B_m(f,g)\|_{p_3}\le C\|f\|_{p_1}\|g\|_{p_2}$$
for any $f, g\in{\mathcal P}(\R^n)$, i.e. $B_m$ extends to a
bounded bilinear operator from $L^{p_1}(\R^n)\times L^{p_2}(\R^n)
$ to $L^{p_3}(\R^n)$ (where we replace $L^\infty(\R^n)$ for
$C_0(\R^n)$ in the case $p_i=\infty$ for $i=1,2$).

We write $\MB_{(p_1,p_2,p_3)}(\R^n)$ for the space of bilinear
multipliers of type  $(p_1,p_2, p_3)$ and $\|m\|_{p_1,p_2,p_3}=
\|B_m\|$.
\end{defi}

The study of bilinear multipliers for smooth symbols (where
$m(\xi,\eta)$  is a ``nice" regular function) goes back to the
work by R.R. Coifman and Y. Meyer in \cite{CM}.

Particularly simple examples are  the following bilinear
convolution-type operators:
 For a given $K\in L_{loc}^1(\R^n)$ we  define
\begin{equation} \label{CK} C_K(f,g)(x)=\int_\R f(x-y)g(x+y) K(y)
dy \end{equation} for $f$ and  $g$ belonging to $C_{00}(\R^n)$.

If  $K\in L^1(\R^n)$ then $m(\xi,\eta)=\hat K(\xi-\eta)$ defines a
multiplier in $\MB_{(p_1,p_2,p_3)}(\R^n)$ for $1/p_1+1/p_2=1/p_3$
if $p_3\ge 1$ and $\|m\|_{p_1,p_2,p_3}\le \|K\|_1$.

Indeed, for $f$ and $g\in {\mathcal S}(\R)$, one has
$f(x-y)=\int_{\R^n} \hat f(\xi) e^{2\pi i\langle
x-y,\xi\rangle}d\xi$ and $g(x+y)=\int_{\R^n} \hat g(\eta) e^{2\pi
i\langle x+y,\eta\rangle}d\eta$. Hence we have \ba
C_K(f,g)(x)&=&\int_{\R^n} f(x-y)g(x+y) K(y) dy\\
&=&\int_{\R^n} \int_{\R^n} \int_{\R^n}\hat f(\xi)\hat g(\eta) K(y)
e^{2\pi i \langle x-y,\xi\rangle}e^{2\pi i\langle x+y,\eta\rangle}d\xi d\eta dy  \\
&=&\int_{\R^n} \int_{\R^n} \hat f(\xi)\hat g(\eta) (\int_{\R^n}
K(y) e^{-2\pi i\langle \xi-\eta,y\rangle} dy)e^{2\pi i\langle
\xi+\eta, x\rangle}d\xi d\eta   \\
&=&\int_{\R^n}\int_{\R^n} \hat g(\eta)\hat f(\xi)\hat K(\xi-\eta)
e^{2\pi i\langle\xi+\eta,x\rangle}d\xi d\eta.   \ea

  This motivates the introduction of the following class of
  multipliers.

\begin{defi} Let $1\le p_1, p_2\le\infty$ and $0< p_3\le \infty$. We
 denote by $\tilde\M_{(p_1,p_2,p_3)}(\R^n)$ the space of
  measurable functions $M:\R^n\to \C$  such that
 $m(\xi,\eta)=M(\xi-\eta)\in \MBR,$ that is to say
 $$ B_M(f,g)(x)=\int_{\R^n} \int_{\R^n}\hat f(\xi)\hat g(\eta)
 M(\xi-\eta)e^{2\pi i \langle \xi+\eta,x\rangle}d\xi d\eta$$
 extends to a bounded bilinear map from
 $L^{p_1}(\R^n)\times L^{p_2}(\R^n)$ into $L^{p_3}(\R^n)$.
 We keep the notation $\|M\|_{p_1,p_2,p_3}= \|B_M\|.$
\end{defi}

 It was
only in the last decade that the cases
$M_0(x)=\frac{1}{|x|^{1-\alpha}}$  were shown to define  bilinear
multipliers of type $(p_1,p_2, p_3)$ for
$1/p_3=1/p_1+1/p_2-\alpha$ for $1<p_1,p_2<\infty$ and
$0<\alpha<1/p_1+1/p_2$ (see (\ref{bfi}) in Theorem \ref{mt0}) and,
in the case $n=1$, $M_1(x)=-i sign (x)$ was shown to define a
bilinear multiplier of type $(p_1,p_2, p_3)$ for
$1/p_3=1/p_1+1/p_2$ for $1<p_1,p_2<\infty$ and $p_3>2/3$ (see
(\ref{bht}) in Theorem \ref{mt0}).
 These two main examples correspond to the following bilinear operators: the {\it
bilinear fractional integral} defined by {$$
I_\alpha(f,g)(x)=\int_{\R} \frac{f(x-y)g(x+y)}{|y|^{1-\alpha}}dy,
\quad 0<\alpha<1$$} and the {\it bilinear Hilbert transform}
defined by $$ H(f,g)(x)=\lim_{\e\to 0}\frac{1}{\pi}
\int_{|y|>\e}\frac{f(x-y)g(x+y)}{y}dy$$
 respectively.

Let us collect the results about their boundedness which are known
nowadays.
\begin{teor} \label{mt0}  \ Let $1<p_1,p_2<\infty$,
$0<\alpha<1/p_1+1/p_2$, $1/q=1/p_1+1/p_2-\alpha$,
$1/p_3=1/p_1+1/p_2$ and {$2/3<p_3<\infty$}. Then there exist
constants $A$ and $ B$ such that
\begin{equation}\label{bht}\|H(f,g)\|_{p_3}\le A\|f\|_{p_1}\|g\|_{p_2} \hbox{(Lacey-Thiele,  \cite{LT1, LT2}),}\end{equation}
\begin{equation}\label{bfi}
\|I_\alpha(f,g)\|_{q}\le B\|f\|_{p_1}\|g\|_{p_2}.\hbox{
(Kenig-Stein {\cite{KS}}, Grafakos-Kalton \cite{GK}
).}\end{equation}
\end{teor}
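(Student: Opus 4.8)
The plan is to treat the two inequalities separately, since they rest on genuinely different circles of ideas; in both cases the full arguments are those of the cited papers rather than anything derivable from the elementary facts collected in this note, so I will only indicate the main lines.

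For the fractional integral bound (\ref{bfi}) I would first pass to the dual trilinear form. Pairing $I_\alpha(f,g)$ against a test function $h$ and changing variables $u=x-y$, $v=x+y$ gives
$$\int_\R I_\alpha(f,g)(x)\,h(x)\,dx=c\int_\R\int_\R f(u)\,g(v)\,h\Big(\tfrac{u+v}{2}\Big)\,|u-v|^{-(1-\alpha)}\,du\,dv,$$
a trilinear form with a positive, homogeneous kernel. The strategy is then to decompose the kernel dyadically, writing $|y|^{-(1-\alpha)}=\sum_{j\in\Z}K_j(y)$ with $K_j$ supported on $\{2^j\le|y|<2^{j+1}\}$, so that $\|K_j\|_1\approx 2^{j\alpha}$. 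The $L^1$ convolution estimate recorded in the Introduction (if $K\in L^1(\R)$ then $C_K$ maps $L^{p_1}\times L^{p_2}$ into $L^{p_3}$ with $1/p_3=1/p_1+1/p_2$ and norm at most $\|K\|_1$) controls each piece, but the bare sum $\sum_j 2^{j\alpha}$ diverges at $j\to+\infty$; the gain $1/q=1/p_3-\alpha<1/p_3$, i.e.\ $q>p_3$, reflects the extra integrability that must be extracted. I would therefore supply for each annular piece a second estimate that decays as $j\to+\infty$ — for instance by dominating $C_{K_j}(f,g)$ pointwise through Hardy-Littlewood maximal functions of $f$ and $g$, or by using the dual form above to land in $L^q$ with $q>p_3$ — and then geometrically interpolate the two bounds for $K_j$ so that the series in $j$ converges, reproducing exactly the relation $1/q=1/p_1+1/p_2-\alpha$. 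Alternatively one invokes directly the multilinear fractional-integration theorem of Kenig-Stein and Grafakos-Kalton (\cite{KS, GK}).

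For the bilinear Hilbert transform (\ref{bht}) there is no such elementary route. The principal-value kernel $1/y$ is not integrable and carries no decay, so the convolution estimate of the Introduction is useless, and the relation $1/p_3=1/p_1+1/p_2$ with $p_3>2/3$ places the problem outside the range reachable by Calder\'on-Zygmund methods. The proof of Lacey and Thiele proceeds instead by time-frequency analysis: one discretizes the trilinear form associated with $H$ into a sum of inner products against wave packets adapted to a collection of tri-tiles in phase space, organizes these tiles into \emph{trees} according to their frequency data, and establishes separate \emph{size} and \emph{energy} estimates for the contribution of a single tree together with a combinatorial counting of the trees; combining these through an orthogonality and selection argument yields the stated bound.

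The main obstacle is precisely this last step. The fractional-integral inequality is, at bottom, a positive-kernel estimate and succumbs to dyadic decomposition and interpolation; by contrast the bilinear Hilbert transform requires the full time-frequency machinery, and the delicate point is the simultaneous control of size and energy together with the tree-counting that makes the whole sum converge. This is why (\ref{bht}) was resolved only much later, and why in a paper of this scope I would cite \cite{LT1, LT2} rather than reconstruct the argument.
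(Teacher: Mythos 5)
The paper offers no proof of this theorem at all: it is stated purely as a record of known results, with the citations to Lacey--Thiele \cite{LT1, LT2} and to Kenig--Stein \cite{KS} and Grafakos--Kalton \cite{GK} standing in place of an argument. Your proposal takes essentially the same route---ultimately deferring to those references---and your outline of the dyadic-decomposition/interpolation scheme for $I_\alpha$ and of the wave-packet, tree, size-and-energy machinery for $H$ is a faithful summary of what those papers do, so there is nothing to correct.
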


Our objective is to study the basic properties of the classes
$\mathcal{BM}_{(p_1,p_2,p_3)}(\R)$ and $\M_{p_1,p_2,p_3}(\R)$, to
find examples of bilinear multipliers in these classes and to get
methods to produce new ones.

As usual, if $f\in L^1(\R^n)$ we denote by $\tau_x$, $M_x$ and
$D^p_t$ the translation  $\tau_x f(y)=f(y-x)$ for $x\in \R^n$, the
modulation  $M_x f(y)= e^{2\pi i\langle x,y\rangle}f(y)$ and the
dilation $D^p_tf(x)=t^{-n/p}f(\frac{x}{t})$  for  $0<p,t<\infty$.

With this notation out of the way one has, for  $1\le p\le \infty$
and $1/p+1/p'=1,$
\begin{equation}\label{basic}\widehat{(\tau_x f)}(\xi)= M_{-x}\hat f(\xi),\quad \widehat {(M_x f)}(\xi)= \tau_x \hat
f(\xi), \quad \widehat{(D^p_t f)}(\xi)= D^{p'}_{t^{-1}}\hat
f(\xi).\end{equation}

Clearly $\tau_x, M_x$ and $D^p_t$ are isometries on $L^p(\R^n)$
for any $0< p\le\infty$.

Although most of the results presented in what follows have a
formulation in $n\ge 1$ we shall restrict ourselves to the case
$n=1$ for simplicity. The reader is referred to  \cite{B, BBCG,
BCG,BV, FS} for several similar results on other groups, and to
find same methods of transference.

\section{Bilinear multipliers: The basics}

Let us start by pointing out a characterization, for $p_3\ge 1$,
in terms of the duality, whose elementary proof is left to the
reader.

\begin{prop} \label{dual} Let $1\le p_3\le \infty$. Then
$m\in \MBR$ if and only if there exists $C>0$ such that
$$|\int_{\R^2} \hat f(\xi) \hat g(\eta) \hat h(\xi+\eta) m(\xi,\eta)d\xi
d\eta |\le C \|f\|_{p_1}\|g\|_{p_2}\|h\|_{p'_3}$$ for all $f, g,
h\in {\mathcal P}(\R)$.
\end{prop}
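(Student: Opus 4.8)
The plan is to establish the duality characterization by identifying the $L^{p_3}$ norm of $B_m(f,g)$ with a supremum of pairings against $L^{p_3'}$ functions, and then to rewrite that pairing as the stated trilinear integral. First I would recall that for $1\le p_3\le\infty$ the norm in $L^{p_3}(\R)$ is computed by duality: for any $F\in L^{p_3}(\R)$ one has $\|F\|_{p_3}=\sup\{|\int_\R F(x)\overline{h(x)}\,dx|: \|h\|_{p_3'}\le 1\}$, and it suffices to take the supremum over $h$ in the dense subclass $\Pol(\R)$. Applying this with $F=B_m(f,g)$ shows that the bound $\|B_m(f,g)\|_{p_3}\le C\|f\|_{p_1}\|g\|_{p_2}$ holds for all $f,g\in\Pol(\R)$ if and only if
$$\left|\int_\R B_m(f,g)(x)\,\overline{h(x)}\,dx\right|\le C\|f\|_{p_1}\|g\|_{p_2}\|h\|_{p_3'}$$
for all $f,g,h\in\Pol(\R)$.

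The key computational step is then to transform the left-hand integral. Inserting the defining formula for $B_m(f,g)(x)$ and interchanging the order of integration, the $x$-integral
$$\int_\R e^{2\pi i\langle\xi+\eta,x\rangle}\overline{h(x)}\,dx$$
is exactly the conjugate Fourier transform of $h$ evaluated at $\xi+\eta$. Since $h\in\Pol(\R)$ is real-analysis--friendly (Schwartz with compactly supported Fourier transform), and writing $\hat h(\zeta)=\int_\R h(x)e^{-2\pi i\langle x,\zeta\rangle}dx$, this inner integral equals $\overline{\hat h(-(\xi+\eta))}=\widehat{\bar h}(\xi+\eta)$. To obtain the clean form stated in the proposition one simply absorbs the complex conjugation into $h$ (replacing $h$ by $\bar h$, which ranges over the same class $\Pol(\R)$ and has the same $L^{p_3'}$ norm), so that after the substitution the pairing becomes
$$\int_{\R^2}\hat f(\xi)\hat g(\eta)\hat h(\xi+\eta)m(\xi,\eta)\,d\xi\,d\eta,$$
which is precisely the trilinear expression in the statement.

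The main obstacle, and the only point requiring genuine care, is justifying the interchange of the order of integration (Fubini) and the convergence of the iterated integrals. This is where the restriction $f,g,h\in\Pol(\R)$ does the work: each of $\hat f,\hat g,\hat h$ is continuous with compact support, so the integrand is supported on a compact subset of $\R^2$ in the $(\xi,\eta)$ variables, and the local integrability of $m$ guarantees absolute integrability there; the $x$-integral is likewise harmless because $h$ is Schwartz. Thus Fubini applies without difficulty, and all manipulations are legitimate on the dense class $\Pol(\R)$. Finally, the equivalence ``$m\in\MBR$ iff the trilinear estimate holds'' follows because the best constant $C$ in the trilinear estimate coincides with $\|B_m\|$, and boundedness on the dense subspace $\Pol(\R)$ extends to all of $L^{p_1}\times L^{p_2}$ by continuity. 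This is exactly the routine verification the author defers to the reader.
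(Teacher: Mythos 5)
The paper gives no written proof of this proposition --- it is explicitly ``left to the reader'' --- so your proposal can only be measured against the intended elementary argument, and it is essentially that argument: H\"older's inequality gives the ``only if'' direction, and converse duality applied to $F=B_m(f,g)$, together with a Fubini computation turning $\int_\R B_m(f,g)\,\overline{h}\,dx$ into the trilinear form, gives the ``if'' direction. Your justification of Fubini (compact supports of $\hat f,\hat g,\hat h$ plus local integrability of $m$) is exactly right. One slip should be fixed but is harmless: $\int_\R e^{2\pi i(\xi+\eta)x}\overline{h(x)}\,dx=\overline{\hat h(\xi+\eta)}$, not $\overline{\hat h(-(\xi+\eta))}$; consequently the substitution that produces the clean trilinear form is $h\mapsto\overline{h(-\cdot)}$, whose Fourier transform is $\overline{\hat h}$, rather than $h\mapsto\bar h$. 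Since $h\mapsto\overline{h(-\cdot)}$ is a bijection of $\Pol(\R)$ preserving the $L^{p_3'}$ norm, nothing else in the argument changes.

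There is, however, a genuine gap at the endpoint $p_3=1$, which the statement includes. Your reduction rests on the principle that $\|F\|_{p_3}$ equals the supremum of $|\int F\bar h|$ over the unit ball of $\Pol(\R)$ \emph{because} $\Pol(\R)$ is norm-dense in $L^{p_3'}(\R)$. For $1<p_3\le\infty$ this is fine: then $p_3'<\infty$, $\Pol(\R)$ is dense in $L^{p_3'}$, the bounded functional extends to all of $L^{p_3'}$, and $(L^{p_3'})^*=L^{p_3}$ yields an $L^{p_3}$ function agreeing a.e.\ with the continuous bounded function $B_m(f,g)$. But for $p_3=1$ one has $p_3'=\infty$, and $\Pol(\R)$ is \emph{not} dense in $L^\infty(\R)$ (its uniform closure lies inside $C_0(\R)$), so the principle you invoke is unavailable there. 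The conclusion still holds, but it needs a weak-$*$ type approximation instead of norm density: with $F=B_m(f,g)$ continuous and bounded, fix $R>0$ and let $v=\chi_{[-R,R]}\,\overline{F}/|F|$ on $\{F\neq 0\}$ and $v=0$ elsewhere, so that $\|v\|_\infty\le 1$ and $\int_\R Fv=\int_{-R}^{R}|F|$. Choose a real, nonnegative Schwartz approximate identity $(\phi_k)$ with $\|\phi_k\|_1=1$ and $\widehat{\phi_k}$ compactly supported (normalized dilates of $|\psi|^2$ with $\hat\psi$ smooth and compactly supported), and set $h_k=\bar v*\phi_k$; each $h_k$ is Schwartz with $\widehat{h_k}=\widehat{\bar v}\,\widehat{\phi_k}$ continuous and compactly supported, hence $h_k\in\Pol(\R)$ with $\|h_k\|_\infty\le 1$. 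Then $|\int_\R F(\overline{h_k}-v)\,dx|\le \|F\|_\infty\,\|v*\phi_k-v\|_1\to 0$, so the trilinear estimate forces $\int_{-R}^{R}|F|\le C\|f\|_{p_1}\|g\|_{p_2}$, and letting $R\to\infty$ gives $\|B_m(f,g)\|_1\le C\|f\|_{p_1}\|g\|_{p_2}$. With this endpoint repair and the sign correction, your proof is complete.
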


We now present a basic example of a bilinear multiplier. For    a
Borel regular measure in $\R$ $\mu$  we denote
$\hat\mu(\xi)=\int_\R e^{-2\pi i x\xi}d\mu(x)$ its Fourier
transform.

\begin{prop} \label{meas} Let $p_3\ge 1$ and
$1/p_1+1/p_2=1/p_3$ and let $m(\xi,\eta)=\hat\mu(\alpha\xi
+\beta\eta)$ where $\mu$ is a Borel regular measure in $\R$ and
$(\alpha,\beta)\in \R^2$. Then $m\in \MBR$ and
$\|m\|_{p_1,p_2,p_3}\le \|\mu\|_1$.
\end{prop}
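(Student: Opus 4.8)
The plan is to first compute $B_m(f,g)$ explicitly and recognize it as a measure-weighted superposition of translated products, and then to bound it by Minkowski followed by Hölder. Writing $\hat\mu(\alpha\xi+\beta\eta)=\int_\R e^{-2\pi i t(\alpha\xi+\beta\eta)}\,d\mu(t)$ and inserting this into the definition of $B_m$, I would interchange the order of integration. Grouping the exponentials as $e^{2\pi i\xi(x-\alpha t)}$ and $e^{2\pi i\eta(x-\beta t)}$ and applying Fourier inversion to each factor, the two inner integrals collapse to $f(x-\alpha t)$ and $g(x-\beta t)$. This yields the key identity
$$B_m(f,g)(x)=\int_\R f(x-\alpha t)\,g(x-\beta t)\,d\mu(t)=\int_\R (\tau_{\alpha t}f)(x)\,(\tau_{\beta t}g)(x)\,d\mu(t),$$
which exhibits $B_m(f,g)$ as a $\mu$-average of products of translates, and which recovers the operator $C_K$ of the introduction when $\alpha=1$, $\beta=-1$, and $d\mu=K(y)\,dy$.

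With this identity in hand the estimate is immediate. Since $p_3\ge 1$, Minkowski's integral inequality lets me pull the $L^{p_3}$ norm inside the integral against the total variation $d|\mu|$:
$$\|B_m(f,g)\|_{p_3}\le \int_\R \big\|(\tau_{\alpha t}f)\,(\tau_{\beta t}g)\big\|_{p_3}\,d|\mu|(t).$$
Hölder's inequality, valid precisely because $1/p_1+1/p_2=1/p_3$, bounds the integrand by $\|\tau_{\alpha t}f\|_{p_1}\,\|\tau_{\beta t}g\|_{p_2}$; and since translations are isometries on $L^p$ (as recorded in the excerpt), this equals $\|f\|_{p_1}\|g\|_{p_2}$, uniformly in $t$. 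Integrating this constant against $d|\mu|$ produces the factor $\|\mu\|_1=|\mu|(\R)$, giving $\|B_m(f,g)\|_{p_3}\le\|\mu\|_1\,\|f\|_{p_1}\|g\|_{p_2}$, which is the desired bound.

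The only point requiring care is the justification of the Fubini interchange in the first step, and this turns out to be harmless: for $f,g\in\mathcal P(\R)$ the transforms $\hat f,\hat g$ are continuous with compact support, so the $(\xi,\eta)$-integration runs over a fixed compact set on which the integrand is bounded, while $|\mu|$ is finite; hence the triple integral is absolutely convergent and the rearrangement is legitimate. I would also note the endpoint bookkeeping when some $p_i=\infty$: the constraint $1/p_1+1/p_2=1/p_3$ then forces $p_2=p_3$ (resp. $p_1=p_3$), translations remain isometries on $C_0(\R)$, and Hölder degenerates to the trivial estimate $\|fg\|_{p_3}\le\|f\|_\infty\|g\|_{p_3}$, so no separate argument is needed in that case.
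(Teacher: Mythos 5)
Your proof is correct and follows essentially the same route as the paper: the identity $B_m(f,g)(x)=\int_\R f(x-\alpha t)\,g(x-\beta t)\,d\mu(t)$ obtained by expanding $\hat\mu$ and interchanging integrals, followed by Minkowski's integral inequality, H\"older's inequality, and translation invariance. Your added remarks on the Fubini justification and the endpoint cases $p_i=\infty$ are sound refinements of details the paper leaves implicit.
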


\begin{proof} Let us first rewrite the value $B_m (f,g)$ as
follows: \ba B_m(f,g)(x)&=&\int_{\R^2} \hat f(\xi)\hat g(\eta)
\hat \mu(\alpha\xi+\beta\eta)e^{2\pi i (\xi+\eta)x}d\xi d\eta\\
&=&\int_{\R^2} \hat f(\xi)\hat g(\eta) (\int_\R e^{-2\pi
i(\alpha\xi+
\beta\eta)t} d\mu(t))e^{2\pi i (\xi+\eta)x}d\xi d\eta\\
&=&\int_\R (\int_{\R^2} \hat f(\xi)\hat g(\eta)e^{2\pi i (x-\alpha
t)\xi}e^{2\pi i(x-\beta t)\eta}d\xi d\eta )d\mu(t)\\&=&\int_\R
f(x-\alpha t)g(x-\beta t)d\mu(t).\ea Hence, using Minkowski's
inequality, one has
\begin{eqnarray*} \|B_m(f,g)\|_{p_3}&\le& \int_\R \|f(\cdot-\alpha t)g(\cdot-\beta t)\|_{p_3}d|\mu|(t)\\
&\le& \int_\R \|f(\cdot-\alpha t)\|_{p_1}\|g(\cdot-\beta t)\|_{p_2}d|\mu|(t)\\
&=& \|f\|_{p_1}\|g\|_{p_2}\int_\R
d|\mu|(t)=\|\mu\|_1\|f\|_{p_1}\|g\|_{p_2}.
 \end{eqnarray*}
\end{proof}

Let us start with some elementary properties  of the bilinear
multipliers when composing with translations, modulations and
dilations.

\begin{prop}\label{p1} Let $m\in \MB_{(p_1,p_2,p_3)}(\R)$.

\begin{enumerate}[\sf \ \ \ (a)]
\item If $m_1\in \tilde\M_{s_1,p_1}(\R)$ and $m_2\in \tilde\M_{s_2,p_2}(\R)$
then $m_1(\xi)m(\xi,\eta)m_2(\eta)\in \MB_{(s_1,s_2,p_3)}(\R)$.
Moreover $$ \|m_1 m m_2\|_{s_1,s_2, p_3}\le \|m_1 \|_{s_1,p_1}\| m
\|_{p_1,p_2, p_3}\| m_2\|_{s_2,p_2}$$

\item  $\tau_{(\xi_0,\eta_0)} m\in \MBR$ for each $(\xi_0,\eta_0)\in
\R^2$ and $$ \|\tau_{(\xi_0,\eta_0)} m\|_{p_1,p_2, p_3}= \|
m\|_{p_1,p_2, p_3}.$$

\item $M_{(\xi_0,\eta_0)} m\in \MBR$ for each $(\xi_0,\eta_0)\in
\R^2$ and $$ \|M_{(\xi_0,\eta_0)} m\|_{p_1,p_2, p_3}= \|
m\|_{p_1,p_2, p_3}$$

\item If $\frac{2}{q}=\frac{1}{p_1}+\frac{1}{p_2}- \frac{1}{p_3}$ and $0<t<\infty$ then  $D_t^q m\in \MBR$ and $$\|D_t^q m \|_{p_1,p_2,
p_3}= \| m\|_{p_1,p_2, p_3}.$$

\end{enumerate}
\end{prop}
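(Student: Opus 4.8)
The plan is to prove each of the four parts by manipulating the defining integral $B_m(f,g)(x)=\int_{\R^2}\hat f(\xi)\hat g(\eta)m(\xi,\eta)e^{2\pi i(\xi+\eta)x}\,d\xi\,d\eta$, reducing each new multiplier to the original one by absorbing the operations into $f$, $g$, or $h$ (via the duality in Proposition \ref{dual}), and then invoking the isometry properties of $\tau_x,M_x,D_t^p$ on $L^p$ together with the Fourier identities in (\ref{basic}).

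For part (a), I would observe that $m_1(\xi)\hat f(\xi)=\widehat{T_{m_1}f}(\xi)$ and $m_2(\eta)\hat g(\eta)=\widehat{T_{m_2}g}(\eta)$, so that $B_{m_1 m m_2}(f,g)=B_m(T_{m_1}f,T_{m_2}g)$. The bound then follows by composing the estimates: $\|B_m(T_{m_1}f,T_{m_2}g)\|_{p_3}\le\|m\|_{p_1,p_2,p_3}\|T_{m_1}f\|_{p_1}\|T_{m_2}g\|_{p_2}\le\|m\|_{p_1,p_2,p_3}\|m_1\|_{s_1,p_1}\|m_2\|_{s_2,p_2}\|f\|_{s_1}\|g\|_{s_2}$. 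For part (b), I would note that the factor $e^{-2\pi i(\xi_0\xi+\eta_0\eta)}$ in $\tau_{(\xi_0,\eta_0)}m$ combines with the exponentials to produce translates; concretely, replacing $\hat f(\xi)$ by $e^{-2\pi i\xi_0\xi}\hat f(\xi)=\widehat{\tau_{\xi_0}f}(\xi)$ and similarly for $g$ shows $B_{\tau_{(\xi_0,\eta_0)}m}(f,g)=B_m(\tau_{\xi_0}f,\tau_{\eta_0}g)$, and since $\tau$ is an $L^p$-isometry the norm is preserved. For part (c), the modulation $M_{(\xi_0,\eta_0)}m$ inserts $e^{2\pi i(\xi_0\xi+\eta_0\eta)}$, which by (\ref{basic}) corresponds to modulating $f$ and $g$; I expect $B_{M_{(\xi_0,\eta_0)}m}(f,g)(x)=e^{2\pi i(\xi_0+\eta_0)x}\cdot(\text{shifted version})$, or more cleanly a relation of the form $B_{M_{(\xi_0,\eta_0)}m}(f,g)=\tau_{-(\xi_0+\eta_0)}\big(B_m(M_{\xi_0}f,M_{\eta_0}g)\big)$ up to harmless modulation/translation, again using that $M_x$ and $\tau_x$ are $L^p$-isometries.

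For part (d), the dilation is the most delicate. Writing $D_t^q m(\xi,\eta)=t^{-2/q}m(\xi/t,\eta/t)$ and substituting into the integral, I would change variables $\xi\mapsto t\xi$, $\eta\mapsto t\eta$ to rewrite $B_{D_t^q m}(f,g)$ in terms of $\hat f(t\xi)=t^{-1}\widehat{D_t^1 f}(\xi)$ type relations; the cleanest bookkeeping uses (\ref{basic}) in the form $\widehat{D_t^p f}(\xi)=D_{t^{-1}}^{p'}\hat f(\xi)$. The goal is to show $B_{D_t^q m}(f,g)=c\cdot D_r^{p_3}\big(B_m(D_t^{p_1}f,D_t^{p_2}g)\big)$ for appropriate scaling parameters, where the exponent condition $\tfrac{2}{q}=\tfrac{1}{p_1}+\tfrac{1}{p_2}-\tfrac{1}{p_3}$ is exactly what makes all the powers of $t$ cancel so that the dilations act as isometries and the constant is $1$. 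I expect this balancing of the dilation exponents to be the main obstacle: one must track the $t$-powers coming from the $t^{-2/q}$ prefactor, from the three Jacobians/normalizations of the dilations on $f$, $g$, and the output, and from the change of variables in the frequency integral, and verify that the stated relation among $p_1,p_2,p_3,q$ is precisely the one that forces them to cancel. Once the scaling identity is established, the isometry of $D_t^p$ on $L^p$ gives the norm equality immediately.
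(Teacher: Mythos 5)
Your overall strategy is exactly the paper's: reduce each new symbol to $m$ through an intertwining identity and then invoke the isometry of $\tau_x$, $M_x$, $D_t^p$ on $L^p$. Parts (a) and (d) are fine: $B_{m_1mm_2}(f,g)=B_m(T_{m_1}f,T_{m_2}g)$ is precisely the identity the paper uses, and your outline for (d) aims at the correct scaling identity, which in the paper's normalization reads $B_m(D_t^{p_1}f,D_t^{p_2}g)=D_t^{p_3}B_{D_t^q m}(f,g)$; carrying out the change of variables as you describe does make all powers of $t$ cancel exactly when $\frac{2}{q}=\frac{1}{p_1}+\frac{1}{p_2}-\frac{1}{p_3}$, with constant $1$.

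However, in (b) and (c) you have the translation/modulation duality backwards, and the identities you assert are false. Translating the symbol does not ``insert the factor $e^{-2\pi i(\xi_0\xi+\eta_0\eta)}$'': $\tau_{(\xi_0,\eta_0)}m(\xi,\eta)=m(\xi-\xi_0,\eta-\eta_0)$, and after the change of variables $\xi\mapsto\xi+\xi_0$, $\eta\mapsto\eta+\eta_0$ one picks up $\hat f(\xi+\xi_0)=\widehat{M_{-\xi_0}f}(\xi)$ (a modulation of $f$, not a translation) together with an outer phase $e^{2\pi i(\xi_0+\eta_0)x}$; the correct identity is $B_{\tau_{(\xi_0,\eta_0)}m}(f,g)=M_{\xi_0+\eta_0}B_m(M_{-\xi_0}f,M_{-\eta_0}g)$. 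Dually, inserting the phase $e^{2\pi i(\xi_0\xi+\eta_0\eta)}$ of $M_{(\xi_0,\eta_0)}m$ next to $\hat f(\xi)\hat g(\eta)$ produces $M_{\xi_0}\hat f=\widehat{\tau_{-\xi_0}f}$, i.e.\ translates of $f$ and $g$ and no outer factor at all: $B_{M_{(\xi_0,\eta_0)}m}(f,g)=B_m(\tau_{-\xi_0}f,\tau_{-\eta_0}g)$. Your claimed formula for (b), $B_{\tau_{(\xi_0,\eta_0)}m}(f,g)=B_m(\tau_{\xi_0}f,\tau_{\eta_0}g)$, is actually (up to signs) the identity for the modulated symbol $M_{(-\xi_0,-\eta_0)}m$, and your expected formula for (c), with modulated inputs and an outer translation, likewise belongs to the translated symbol. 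Since every operator in sight is an isometry on every $L^p$, the norm equalities you want would still follow once the two correspondences are swapped and the outer factors corrected, so the defect is repairable --- but as written the two key intertwining identities in (b) and (c) are wrong, and a proof of equality of multiplier norms cannot rest on false identities.
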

\begin{proof}
Use (\ref{basic}) to deduce the following formulas   \be B_{m_1 m
m_2}(f,g)= B_m(T_{m_1}f, T_{m_2}g).\ee \be
B_{\tau_{(\xi_0,\eta_0)} m}(f,g)= M_{\xi_0+\eta_0}B_m(
M_{-\xi_0}f,M_{-\eta_0}g).\ee \be B_{M_{(\xi_0,\eta_0)} m}(f,g)=
B_m( \tau_{-\xi_0}f,\tau_{-\eta_0}g).\ee \be \label{dilatacion}
B_m(D_t^{p_1}f,D_t^{p_2}g)=D_t^{p_3}B_{D^q_t m}(f,g).\ee Let us
check only the validity of last one. The other ones follow easily
from the previous facts.
 \ba
 B_m(D_t^{p_1}f,D_t^{p_2}g)(x)&=& \int_{\R^2}t^{\frac{1}{p_1'}}\hat
 f(t\xi)t^{\frac{1}{p_2'}}\hat g(t\eta)m(\xi, \eta) e^{2\pi
 i(\xi+\eta)x}d\xi d\eta\\
 &=& \int_{\R^2}t^{\frac{1}{p_1'}}\hat
 f(\xi)t^{\frac{1}{p_2'}}\hat g(\eta)m(\frac{\xi}{t}, \frac{\eta}{t}) e^{2\pi
 i(\xi+\eta)\frac{x}{t}}t^{-2}d\xi d\eta\\
 &=& t^{-\frac{1}{p_3}}\int_{\R^2}\hat
 f(\xi)\hat g(\eta)t^{-\frac{1}{p_1}-\frac{1}{p_2}+ \frac{1}{p_3}} m(\frac{\xi}{t}, \frac{\eta}{t}) e^{2\pi
 i(\xi+\eta)\frac{x}{t}}d\xi d\eta\\
 &=& D_t^{p_3}B_{D^q_t m}(f,g)(x).
 \ea

\end{proof}

From (\ref{dilatacion}) we can see that the condition
$1/p_1+1/p_2=1/p_3$ is also connected to the homogeneity of the
symbol.

\begin{prop} Let $m\in \MBR$ such that $m(t\xi,t\eta)=m(\xi,\eta)$ for any $t>0$. Then $\frac{1}{p_1}+\frac{1}{p_2}=\frac{1}{p_3}$.
\end{prop}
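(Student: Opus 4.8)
The plan is to read off the desired relation among the exponents directly from the dilation behaviour already recorded in Proposition \ref{p1}(d). The key point is that the scaling identity (\ref{dilatacion}) is sensitive precisely to the quantity $\frac{1}{p_1}+\frac{1}{p_2}-\frac{1}{p_3}$, and the homogeneity hypothesis on $m$ will turn that sensitivity into a forced identity unless the quantity vanishes.

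First I would set $\frac{2}{q}=\frac{1}{p_1}+\frac{1}{p_2}-\frac{1}{p_3}$ as in Proposition \ref{p1}(d) and write out the dilate $D_t^q m$ of the two-variable symbol explicitly. With the convention used in the computation establishing (\ref{dilatacion}), one has $D_t^q m(\xi,\eta)=t^{-2/q}m(\frac{\xi}{t},\frac{\eta}{t})$ for $t>0$. Invoking the hypothesis $m(\frac{\xi}{t},\frac{\eta}{t})=m(\xi,\eta)$ (degree-zero homogeneity) collapses this to $D_t^q m=t^{-2/q}m$ as functions on $\R^2$.

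Next, since $B_m$ depends linearly on $m$, this gives $B_{D_t^q m}=t^{-2/q}B_m$, whence $\|D_t^q m\|_{p_1,p_2,p_3}=t^{-2/q}\|m\|_{p_1,p_2,p_3}$. On the other hand, Proposition \ref{p1}(d) asserts that dilation preserves the multiplier norm, $\|D_t^q m\|_{p_1,p_2,p_3}=\|m\|_{p_1,p_2,p_3}$. Comparing the two expressions yields $\|m\|_{p_1,p_2,p_3}=t^{-2/q}\|m\|_{p_1,p_2,p_3}$ for every $t>0$.

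Finally I would conclude. Setting aside the trivial case $B_m=0$, we have $\|m\|_{p_1,p_2,p_3}>0$, so the last identity forces $t^{-2/q}=1$ for all $t>0$, i.e. $\frac{2}{q}=0$, which is exactly $\frac{1}{p_1}+\frac{1}{p_2}=\frac{1}{p_3}$. There is no serious obstacle here, as everything reduces to bookkeeping with (\ref{dilatacion}); the only points requiring a moment's care are using the same $t^{-2/q}$ normalization for the dilation of a function of the two variables $(\xi,\eta)$ that is implicit in the derivation of (\ref{dilatacion}), and the harmless exclusion of the identically zero operator.
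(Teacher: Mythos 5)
Your proof is correct and takes essentially the same route as the paper: both arguments rest on the dilation identity (\ref{dilatacion}) of Proposition \ref{p1}(d) together with the degree-zero homogeneity (i.e. $D_t^q m = t^{-2/q}m$), so that the scaling factor $t^{-2/q}$ is forced to be trivial. The only cosmetic difference is that you invoke the norm equality of Proposition \ref{p1}(d) plus linearity of $m\mapsto B_m$, whereas the paper unwinds (\ref{dilatacion}) directly using that $D_t^{p_1},D_t^{p_2},D_t^{p_3}$ are isometries; your explicit exclusion of the trivial case $\|m\|_{p_1,p_2,p_3}=0$, which the paper leaves implicit, is a reasonable point of care.
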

\begin{proof} From assumption $D_t^\infty m= m$. Using (\ref{dilatacion})
 we have $$B_m(D_t^{p_1}f, D_t^{p_2}g)=
t^{1/p_3-(1/p_1+1/p_2)}D^{p_3}_tB_{m}(f, g)$$ and therefore
\ba\|B_m(f,g)\|_{p_3}&=&\|D_t^{p_3}B_m(f,g)\|_{p_3}\\
&=&t^{-1/p_3+(1/p_1+1/p_2)}\|B_m(D^{p_1}f,D_t^{p_2}g)\|_{p_3}\\
&\le &t^{-1/p_3+(1/p_1+1/p_2)}\|B_m\|\|f\|_{p_1}\|g\|_{p_2}.
 \ea
For this to hold for any $0<t<\infty$ one needs
$1/p_1+1/p_2=1/p_3$.\end{proof}

Let us combine the previous results to get new bilinear
multipliers from a given one.
\begin{prop} \label{convo} Let $p_3\ge 1$  and $m\in \MBR$.

\begin{enumerate}[\sf \ \ \ (a)]

\item If $Q=[a,b]\times [c,d]$ and $1<p_1,p_2<\infty$ then $m\chi_{Q}\in \MBR$ and
 $\|m\chi_{Q}\|_{p_1,p_2,p_3}\le C \|m\|_{p_1,p_2,p_3}.$

\item If $\Phi\in L^1(\R^2)$ then $\Phi*m\in \MBR$ and
 $\|\Phi*m\|_{p_1,p_2,p_3}\le \|\Phi\|_1 \|m\|_{p_1,p_2,p_3}.$

\item If $\Phi\in L^1(\R^2)$  then $\hat\Phi m\in \MBR$ and
 $\|\hat\Phi m\|_{p_1,p_2,p_3}\le \|\Phi\|_1
\|m\|_{p_1,p_2,p_3}.$

\item If $\psi\in L^1(\R^+, t^{\frac{1}{p_3}-(\frac{1}{p_1}+\frac{1}{p_2}})$  then $m_\psi(\xi,\eta)=\int_0^\infty m(t\xi,t\eta)\psi(t)dt \in \MBR$.
Moreover $\|m_\psi\|_{p_1,p_2,p_3}\le \|\psi\|_1
\|m\|_{p_1,p_2,p_3}.$

\end{enumerate}
\end{prop}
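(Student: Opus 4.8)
The plan is to derive all four parts from the invariance properties already recorded in Proposition \ref{p1}, by writing each new symbol as a superposition of symbols that individually lie in $\MBR$ with controlled norm --- a product in (a), an integral average in (b)--(d) --- and then pulling the superposition through the bilinear form $B$ and invoking Minkowski's integral inequality in the target space. The standing hypothesis $p_3\ge 1$ is precisely what makes Minkowski's inequality in $L^{p_3}(\R)$ available, exactly as in the proof of Proposition \ref{meas}.

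For (a) I would factor $\chi_Q(\xi,\eta)=\chi_{[a,b]}(\xi)\chi_{[c,d]}(\eta)$ and apply Proposition \ref{p1}(a) with $m_1=\chi_{[a,b]}$, $m_2=\chi_{[c,d]}$ and $s_1=p_1$, $s_2=p_2$. The key point is that for $1<p<\infty$ the indicator of an interval is a Fourier multiplier on $L^p(\R)$ whose norm is bounded by a constant independent of the endpoints: writing $\chi_{[a,b]}=\chi_{[a,\infty)}-\chi_{[b,\infty)}$ and using that modulation is an isometry on $L^p$ (so that the multiplier norm of $\chi_{[a,\infty)}$ equals that of the Riesz projection $\chi_{[0,\infty)}$), one gets $\|\chi_{[a,b]}\|_{p,p}\le 2\|\chi_{[0,\infty)}\|_{p,p}=:C_p<\infty$. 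Proposition \ref{p1}(a) then gives $m\chi_Q\in\MBR$ with $\|m\chi_Q\|_{p_1,p_2,p_3}\le C_{p_1}C_{p_2}\|m\|_{p_1,p_2,p_3}$, which is the asserted bound with $C=C_{p_1}C_{p_2}$.

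For (b), (c), (d) I would write the new symbol as an integral of translates, modulates, or dilates of $m$. Concretely, $\Phi*m=\int_{\R^2}\Phi(u,v)\,\tau_{(u,v)}m\,du\,dv$ for (b); $\hat\Phi\,m=\int_{\R^2}\Phi(u,v)\,M_{(-u,-v)}m\,du\,dv$ for (c), since the factor $e^{-2\pi i(u\xi+v\eta)}$ in $\hat\Phi$ is exactly $M_{(-u,-v)}$ acting on the symbol; and, setting $\tfrac{2}{q}=\tfrac{1}{p_1}+\tfrac{1}{p_2}-\tfrac{1}{p_3}$, the dilation bookkeeping $m(t\,\cdot,t\,\cdot)=t^{-2/q}D^q_{1/t}m$ gives $m_\psi=\int_0^\infty \psi(t)\,t^{-2/q}D^q_{1/t}m\,dt$ for (d). In each case Proposition \ref{p1}(b), (c) or (d) tells us the inner symbol lies in $\MBR$ with norm equal to $\|m\|_{p_1,p_2,p_3}$. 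Passing the average through $B$ yields $B_{\Phi*m}(f,g)=\int_{\R^2}\Phi(u,v)B_{\tau_{(u,v)}m}(f,g)\,du\,dv$ and similarly in (c), (d), after which Minkowski's integral inequality produces the stated estimates; in (d) the weight $t^{-2/q}=t^{1/p_3-1/p_1-1/p_2}$ is exactly the one defining the weighted $L^1(\R^+)$ norm of $\psi$.

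I expect the one genuinely substantive point in (b)--(d) to be the justification of interchanging the integral with the bilinear operator and then with the $L^{p_3}$ norm. The first interchange follows from Fubini applied to the absolutely convergent defining integral for $B$ --- legitimate because $\hat f,\hat g\in C_{00}(\R)$ for $f,g\in\Pol(\R)$ --- and the second is where $p_3\ge 1$ enters, requiring only that the $L^{p_3}$-valued integrand be measurable and integrable in the averaging variable, after which Minkowski closes the estimate. In (a) the single fact needing care is the uniform-in-endpoints bound for the interval multiplier, which is what forces the appearance of a constant $C$ rather than an equality; everything else reduces to the formulas already established in Proposition \ref{p1}.
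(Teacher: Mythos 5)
Your proposal is correct and follows essentially the same route as the paper: part (a) via the factorization $\chi_Q(\xi,\eta)=\chi_{[a,b]}(\xi)\chi_{[c,d]}(\eta)$ and Proposition \ref{p1}(a), and parts (b)--(d) by writing the new symbol as an average of translates, modulates, or dilates of $m$, pulling the average through $B$, and closing with Minkowski's integral inequality (which is where $p_3\ge 1$ enters). Your additional justifications --- the uniform-in-endpoints bound for $\|\chi_{[a,b]}\|_{p,p}$ via the Riesz projection, and the Fubini step for $f,g\in\Pol(\R)$ --- are details the paper leaves implicit, but the argument is the same.
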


\begin{proof} (a) Use that $\chi_{[a,b]}\in {\tilde{\mathcal M}}_{p_1,p_1}$ for $1<p_1<\infty$ and $\chi_{[c,d]}\in {\tilde{\mathcal M}}_{p_2,p_2}$ for
$1<p_2<\infty$ together with Proposition \ref{p1} part (a).

(b) Note that \ba B_{\Phi* m}(f,g)(x)&=&\int_{\R^2} \hat f(\xi)
\hat g(\eta) (\int_{\R^2} m(\xi-u, \eta-v) \Phi(u,v)dudv)e^{2\pi
i(\xi+\eta)x}d\xi d\eta\\
&=&\int_{\R^2} \big(\int_{\R^2}\hat f(\xi) \hat g(\eta)
 m(\xi-u, \eta-v) e^{2\pi
i(\xi+\eta)x}d\xi d\eta\big)\Phi(u,v)dudv\\
&=&\int_{\R^2} B_{\tau_{(u,v)}m}(f, g)(x)\Phi(u,v)dudv. \ea

 From the vector-valued Minkowski
inequality  and Proposition \ref{p1} part (b), we have \ba
\|B_{\Phi* m}(f,g)\|_{p_3}&\le &\int_{\R^2}
\|B_{\tau_{(u,v)}m}(f, g)\|_{p_3} |\Phi(u,v)|dudv\\
&\le& \|m\|_{p_1,p_2,p_3} \|f\|_{p_1}\|g\|_{p_2} \|\Phi\|_1.\ea

(c) Observe that\ba B_{\hat \Phi  m}(f,g)(x)&=&\int_{\R^2} \hat
f(\xi) \hat g(\eta) (\int_{\R^2} M_{(-u, -v)}m(\xi, \eta)
\Phi(u,v)dudv)e^{2\pi
i(\xi+\eta)x}d\xi d\eta\\
&=&\int_{\R^2}  B_{M_{(-u, -v)}m}(f,g)(x)\Phi(u,v)dudv.\\
\ea

Argue as above, using now Proposition \ref{p1} part (c), to
conclude the result.

(d) Use now Proposition \ref{p1} part (d), for
$\frac{1}{p_3}-(\frac{1}{p_1}+\frac{1}{p_2})= -\frac{2}{q}$,   \ba
B_{m_\psi}(f,g)(x)&=&\int_{\R^2} \hat f(\xi) \hat g(\eta)
(\int_{0}^\infty D^q_{t^{-1}}m(\xi, \eta) t^{-2/q} \psi(t)
dt)e^{2\pi
i(\xi+\eta)x}d\xi d\eta\\
&=&\int_{0}^\infty  B_{D^q_{t^{-1}}m}(f,g)(x)t^{-2/q}
\psi(t) dt.\\
\ea
\end{proof}

With all these procedures we have several useful methods to
produce multipliers in $\MBR$. Let us mention one application of
each of them.

\begin{ejem}
\begin{enumerate}[\sf \ \ \ (1)]
\item
If $\frac{1}{p_1}+\frac{1}{p_2}=\frac{1}{p_3}$, $m_1\in
\tilde\M_{(p_1,p_1)}$ and $m_2\in \tilde\M_{(p_2,p_2)}$ then
$m(\xi,\eta)=m_1(\xi)m_2(\eta) \in \MB_{p_1,p_2, p_3}.$

\item If $m\in \MBR$, $p_3\ge 1$ and $Q_1, Q_2$ are bounded measurable sets in $\R$ then $$\frac{1}{|Q_1||Q_2|}\int_{Q_1\times
Q_2} m(\xi+u, \eta+v) du dv\in \MBR.$$

\item If $\Phi \in L^1(\R^2)$ then $\hat \Phi\in \MBR$ for
$\frac{1}{p_1}+\frac{1}{p_2}=\frac{1}{p_3}$, $p_3\ge 1$.

\item  If $m\in \MBR$, $|\frac{1}{p_1}+\frac{1}{p_2}- \frac{1}{p_3}|<1$
then $$m_1(\xi,\eta)=\int_0^\infty m(t\xi,t\eta)\frac{dt}{1+t^2}
\in \MBR.$$
\end{enumerate}
\end{ejem}

A combination of the previous results gives the following examples
of bilinear multipliers in ${\mathcal BM}_{(1,1,p_3)}(\R)$ whose
proof is left to the reader.
\begin{coro} Let $\Phi\in L^1(\R^2)$, $\psi_1\in L^{p_1}(\R)$ and
$\psi_2\in L^{p_2}(\R)$ and
$\frac{1}{p_1}+\frac{1}{p_2}=\frac{1}{p_3}\le 1$ then
$$m(\xi,\eta)=\hat\psi_1(\xi)
\hat\Phi(\xi,\eta)\hat\psi_2(\eta)\in {\mathcal
BM}_{(1,1,p_3)}(\R).$$
\end{coro}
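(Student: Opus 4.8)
The plan is to build $m(\xi,\eta)=\hat\psi_1(\xi)\hat\Phi(\xi,\eta)\hat\psi_2(\eta)$ up as a product of three factors, each of which is handled by a result already established in the excerpt. The natural decomposition is to write $m = m_1(\xi)\,\big(\hat\Phi(\xi,\eta)\big)\,m_2(\eta)$ where $m_1=\hat\psi_1$ and $m_2=\hat\psi_2$, and then to apply Proposition~\ref{p1}(a) with the middle factor $\hat\Phi$ playing the role of the ``base'' multiplier $m$.

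First I would establish that the middle factor $\hat\Phi$ is itself a bilinear multiplier. Since $\Phi\in L^1(\R^2)$, Example~(3) (equivalently Proposition~\ref{convo}(c) applied with the trivial multiplier $m\equiv 1$) gives $\hat\Phi\in\MB_{(r_1,r_2,p_3)}(\R)$ whenever $\frac{1}{r_1}+\frac{1}{r_2}=\frac{1}{p_3}$ and $p_3\ge 1$. The subtlety is that I must choose the indices $r_1,r_2$ so that the outer factors can be attached: I want $\hat\Phi$ to act on $L^{r_1}\times L^{r_2}$ where $r_1=p_1$ and $r_2=p_2$ are exactly the target exponents, because then the factors $\hat\psi_1$ and $\hat\psi_2$ must be linear multipliers of type $(1,p_1)$ and $(1,p_2)$ respectively.

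Next I would identify the two outer factors as linear multipliers. Since $\psi_1\in L^{p_1}(\R)$ with $1<p_1<\infty$ (forced by $\frac{1}{p_1}\le\frac1{p_3}\le 1$ and the constraint $\frac1{p_1}+\frac1{p_2}=\frac1{p_3}\le 1$), the excerpt's identification $\M_{1,q}(\R)=\{u\in\mathcal S'(\R): u\in L^q(\R)\}$ for $1<q<\infty$ shows that $\psi_1$, viewed as a convolution kernel, lies in $\M_{1,p_1}(\R)$; passing to the multiplier-space side $\tilde\M_{1,p_1}(\R)$ gives $\hat\psi_1\in\tilde\M_{1,p_1}(\R)$ with norm controlled by $\|\psi_1\|_{p_1}$. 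Similarly $\hat\psi_2\in\tilde\M_{1,p_2}(\R)$. Then Proposition~\ref{p1}(a), applied with $m=\hat\Phi\in\MB_{(p_1,p_2,p_3)}(\R)$, $m_1=\hat\psi_1\in\tilde\M_{1,p_1}(\R)$ (so $s_1=1$), and $m_2=\hat\psi_2\in\tilde\M_{1,p_2}(\R)$ (so $s_2=1$), yields $m_1\,m\,m_2=\hat\psi_1\,\hat\Phi\,\hat\psi_2\in\MB_{(1,1,p_3)}(\R)$, which is exactly the claim.

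The main obstacle I anticipate is bookkeeping with the exponents rather than any deep estimate: I must verify that $\frac1{p_1}+\frac1{p_2}=\frac1{p_3}$ is precisely the hypothesis needed for $\hat\Phi$ to be a multiplier of type $(p_1,p_2,p_3)$ via Example~(3), and that $1<p_1,p_2<\infty$ so that the linear identification $\M_{1,p_i}=\{u\in\mathcal S': u\in L^{p_i}\}$ applies (the endpoint $p_i=1$ would fall outside the stated range $1<q<\infty$). Once the index constraints are confirmed consistent, the three-factor decomposition plugs directly into Proposition~\ref{p1}(a) and the norm bound $\|m\|_{1,1,p_3}\le\|\psi_1\|_{p_1}\|\Phi\|_1\|\psi_2\|_{p_2}$ drops out of the submultiplicativity there, so no genuine analytic difficulty remains.
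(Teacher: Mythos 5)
Your argument is correct and is exactly the combination the paper has in mind (it leaves the proof to the reader as ``a combination of the previous results''): Example (3) gives $\hat\Phi\in\MB_{(p_1,p_2,p_3)}(\R)$, and Proposition~\ref{p1}(a) with $\hat\psi_i\in\tilde\M_{1,p_i}(\R)$ --- which amounts to Young's inequality, since $T_{\hat\psi_i}\phi=\psi_i*\phi$ --- attaches the two outer factors and yields the stated norm bound. The only blemish is your parenthetical claim that $1<p_1,p_2<\infty$ is forced by the exponent constraint (it is not: e.g.\ $p_1=1$, $p_2=\infty$, $p_3=1$ is admissible), but the inclusion you actually need, namely that every $u\in L^{q}(\R)$ gives a bounded convolution operator from $L^1(\R)$ to $L^{q}(\R)$, holds for all $1\le q\le\infty$ by Young's inequality (the restriction $1<q<\infty$ in the paper's identification of $\M_{1,q}(\R)$ only matters for the converse containment), so the argument survives the endpoint cases.
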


Let us use Proposition \ref{dual}  and interpolation to get a
sufficient integrability condition to guarantee that $m\in \MBR$.

\begin{teor}\label{interp}

Let  $1\le p_1,p_2\le p\le 2$ and $p_3\ge p'$ such that
$\frac{1}{p_1}+\frac{1}{p_2}-\frac{2}{p}=\frac{1}{p_3}$. If $m\in
L^{p}(\R^2)$ then $m\in {\mathcal {BM}}_{(p_1,p_2,p_3)}(\R)$.

\end{teor}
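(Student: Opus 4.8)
The plan is to use the duality statement of Proposition \ref{dual} to turn the claim into a single estimate for the associated trilinear form, and then to strip off the three factors using H\"older's inequality, Young's convolution inequality, and the Hausdorff--Young inequality. Since $p_3\ge p'\ge 2\ge 1$, Proposition \ref{dual} applies, so it suffices to produce a constant $C>0$ (in fact $C=\|m\|_p$, the $L^p(\R^2)$ norm) with
$$\Big|\int_{\R^2}\hat f(\xi)\hat g(\eta)\hat h(\xi+\eta)\,m(\xi,\eta)\,d\xi\,d\eta\Big|\le C\|f\|_{p_1}\|g\|_{p_2}\|h\|_{p_3'}$$
for all $f,g,h\in\mathcal P(\R)$. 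For such functions $\hat f,\hat g,\hat h\in C_{00}(\R)$, so every integral below converges absolutely and Fubini applies without comment. One may assume $p>1$; the case $p=1$ forces $p_1=p_2=1$, $p_3=\infty$, and is the immediate bound $|B_m(f,g)(x)|\le\|m\|_1\|\hat f\|_\infty\|\hat g\|_\infty$.

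First I would apply H\"older's inequality in the variable $(\xi,\eta)$ with the conjugate exponents $p$ and $p'$ to isolate $m$:
$$\Big|\int_{\R^2}\hat f(\xi)\hat g(\eta)\hat h(\xi+\eta)m(\xi,\eta)\,d\xi\,d\eta\Big|\le \|m\|_p\Big(\int_{\R^2}|\hat f(\xi)|^{p'}|\hat g(\eta)|^{p'}|\hat h(\xi+\eta)|^{p'}\,d\xi\,d\eta\Big)^{1/p'}.$$
The remaining factor has exactly the shape of the trilinear convolution form $\int_{\R^2}u(\xi)v(\eta)w(\xi+\eta)\,d\xi\,d\eta$, which, on writing it as $\int_\R (u*v)(s)w(s)\,ds$ and combining H\"older with Young's convolution inequality, is bounded by $\|u\|_{r_1}\|v\|_{r_2}\|w\|_{r_3}$ whenever $1\le r_1,r_2,r_3\le\infty$ and $\frac1{r_1}+\frac1{r_2}+\frac1{r_3}=2$. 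I would take $u=|\hat f|^{p'}$, $v=|\hat g|^{p'}$, $w=|\hat h|^{p'}$ and set $r_1=p_1'/p'$, $r_2=p_2'/p'$, $r_3=p_3/p'$, so that $\||\hat f|^{p'}\|_{r_1}=\|\hat f\|_{p_1'}^{p'}$ and likewise for the other two factors.

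The key bookkeeping step, and the only real content, is to verify that these exponents are admissible: the conditions $p_1,p_2\le p$ give $r_1,r_2\ge 1$, the condition $p_3\ge p'$ gives $r_3\ge 1$, and the hypothesis $\frac1{p_1}+\frac1{p_2}-\frac2p=\frac1{p_3}$ is precisely what forces $\frac1{r_1}+\frac1{r_2}+\frac1{r_3}=p'\big(\frac1{p_1'}+\frac1{p_2'}+\frac1{p_3}\big)=2$. The degenerate corners (such as $p_3=\infty$, where $r_3=\infty$) are covered because the trilinear estimate holds on the whole closed simplex $\sum 1/r_i=2$. This yields
$$\Big(\int_{\R^2}|\hat f(\xi)|^{p'}|\hat g(\eta)|^{p'}|\hat h(\xi+\eta)|^{p'}\,d\xi\,d\eta\Big)^{1/p'}\le \|\hat f\|_{p_1'}\|\hat g\|_{p_2'}\|\hat h\|_{p_3}.$$
Finally, since $p_1,p_2\le 2$ and $p_3'\le 2$ (as $p_3\ge p'\ge 2$), the Hausdorff--Young inequality gives $\|\hat f\|_{p_1'}\le\|f\|_{p_1}$, $\|\hat g\|_{p_2'}\le\|g\|_{p_2}$, and $\|\hat h\|_{p_3}\le\|h\|_{p_3'}$. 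Chaining the three displays produces the desired bound with $C=\|m\|_p$, whence $m\in\mathcal{BM}_{(p_1,p_2,p_3)}(\R)$ and even $\|m\|_{p_1,p_2,p_3}\le\|m\|_p$. I do not expect a genuine obstacle: the proof is a chain of classical inequalities, and the whole point is to recognize the reduction to the convolution-form Young inequality and to check that the exponent identity in the hypothesis is exactly its admissibility condition $\sum 1/r_i=2$.
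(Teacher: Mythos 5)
Your proof is correct, but it follows a genuinely different route from the paper's. The paper also works with the trilinear form of Proposition \ref{dual} and the same two tools (H\"older and Hausdorff--Young), but it uses them only to establish three endpoint cases: $m\in{\mathcal{BM}}_{(p,p,\infty)}(\R)$ directly, and $m\in{\mathcal{BM}}_{(p,1,p')}(\R)$, $m\in{\mathcal{BM}}_{(1,p,p')}(\R)$ after the change of variables $u=\xi+\eta$, $v=-\xi$; it then reaches the general triple $(p_1,p_2,p_3)$ by two successive rounds of multilinear interpolation. You bypass interpolation altogether: after peeling off $\|m\|_{L^p(\R^2)}$ by H\"older, you identify the remaining factor as the trilinear convolution form $\int_\R (u*v)(s)\,w(s)\,ds$ and apply Young's inequality on the closed simplex $\frac1{r_1}+\frac1{r_2}+\frac1{r_3}=2$; the crux, which you verify correctly, is that $r_1,r_2\ge 1$ encodes $p_1,p_2\le p$, that $r_3\ge 1$ encodes $p_3\ge p'$, and that the simplex condition is exactly the hypothesis $\frac1{p_1}+\frac1{p_2}-\frac2p=\frac1{p_3}$. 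Your route buys a one-shot, self-contained argument with the explicit constant $\|m\|_{p_1,p_2,p_3}\le\|m\|_{L^p(\R^2)}$ and no appeal to bilinear interpolation theory (which the paper invokes without detailed justification, and whose final bookkeeping step is the least transparent part of its proof); the paper's route follows the standard endpoint-plus-interpolation template of multiplier theory, which remains usable in situations where a single H\"older application cannot cover the whole range at once. Your separate treatment of $p=1$ (which forces $p_1=p_2=1$, $p_3=\infty$) correctly avoids the degenerate exponents in both H\"older and Young.
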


\begin{proof}
Let us show first that $m\in {\mathcal {BM}}_{(p,p,\infty)}(\R).$
Let $f, g\in L^p(\R)$ and $h\in L^{1}(\R)$. Using H\"older and
Hausdorff-Young's inequalities one gets \ba|\int_{\R^2} \hat
f(\xi) \hat g(\eta) \hat h(\xi+\eta) m(\xi,\eta)d\xi d\eta |&\le&
\|m\|_{L^p(\R^2)} \|\hat h\|_\infty\|\hat f\|_{p'}\|\hat
g\|_{p'}\\
&\le& \|m\|_{L^p(\R^2)}\|h\|_1\|f\|_p \|g\|_p.\ea

Similarly, changing the variables $\xi+\eta=u$, $\xi=-v$, one has
$$\int_{\R^2} \hat f(\xi) \hat
g(\eta) \hat h(\xi+\eta) m(\xi,\eta)d\xi d\eta = \int_{\R^2} \hat
f(-v) \hat g(u+v) \hat h(u) m(-v,u+v)dv du.$$ An argument as above
gives also the estimate
$$|\int_{\R^2} \hat
f(-v) \hat g(u+v) \hat h(u) m(-v,u+v)dv du |\le
\|m\|_{L^p(\R^2)}\|g\|_1\|f\|_p \|h\|_p.$$ This shows that $m\in
{\mathcal {BM}}_{(p,1,p')}(\R)$. A similar argument shows also
that $m\in {\mathcal {BM}}_{(1,p,p')}(\R)$.

 Given $1\le
\tilde p_1\le p$ and $p'\le \tilde p_3\le \infty$ with
$\frac{1}{\tilde p_1}-\frac{1}{\tilde p_3}=\frac{1}{p}$ we have
$0\le\theta\le 1$ such that $\frac{1}{\tilde
p_1}=\frac{1-\theta}{p}+ \frac{\theta}{1}$ and $\frac{1}{\tilde
p_3}=\frac{1-\theta}{\infty}+ \frac{\theta}{p'}$. Hence, by
interpolation, $m\in {\mathcal {BM}}_{(\tilde p_1,p,\tilde
p_3)}(\R)$.

Similarly  $m\in {\mathcal {BM}}_{(p,\tilde p_2,\tilde q_3)}(\R)$
whenever $1\le \tilde p_2\le p$ and $p'\le \tilde q_3\le \infty$
with $\frac{1}{\tilde p_2}-\frac{1}{\tilde q_3}=\frac{1}{p}$.

To finish the proof we observe that if $1< p_1<p$ and $1<p_2<p$
then for each $0<\theta<1$ there exist $ 1\le \tilde p_1\le p_1<p$
and $ 1\le \tilde p_2\le p_2<p$ such that $$\frac{1}{p_1}-
\frac{1}{p}=(1-\theta)(\frac{1}{\tilde p_1}- \frac{1}{p}),\quad
\frac{1}{p_2}- \frac{1}{p}=\theta(\frac{1}{\tilde p_2}-
\frac{1}{p}).$$ Denoting $\tilde p_3, \tilde q_3$ the values such
that $\frac{1}{\tilde p_2}- \frac{1}{p}=\frac{1}{\tilde p_3}$ and
$ \frac{1}{\tilde p_2}- \frac{1}{p}=\frac{1}{\tilde q_3}$ one
obtains that
$$\frac{1}{p_1}=
\frac{(1-\theta)}{\tilde p_1}+ \frac{\theta}{p}, \quad
\frac{1}{p_2}= \frac{(1-\theta)}{p}+ \frac{\theta}{\tilde p_1},
\quad \frac{1}{p_3}= \frac{(1-\theta)}{\tilde p_3}+
\frac{\theta}{\tilde q_3}.$$ Hence the  result follows again from
interpolation between the last ones.
\end{proof}

\section{Bilinear multipliers defined by functions in one variable}

Let us restrict ourselves to a smaller family of multipliers where
$m(\xi, \eta)=M(\xi-\eta)$ for some $M$ defined in $\R$.  These
multipliers satisfy \begin{equation} B_m(M_xf, M_xg) =
M_{2x}B_m(f,g) .\end{equation}As in the introduction we use the
notation $\tilde\M_{p_1,p_2,p_3}(\R)$ for the space of
 functions $M:\R\to \C$  such that
 $m(\xi,\eta)=M(\xi-\eta)\in \MBR,$ that is to say
 $$ B_M(f,g)(x)=\int_{\R^2} \hat f(\xi)\hat g(\eta)
 M(\xi-\eta)e^{2\pi i (\xi+\eta)x}d\xi d\eta,$$
defined for $\hat f$ and $\hat g$ compactly supported, extends to
a bounded bilinear map from
 $L^{p_1}(\R)\times L^{p_2}(\R)$ into $L^{p_3}(\R)$.
 We keep the notation $\|M\|_{p_1,p_2,p_3}= \|B_M\|.$

The reader should be aware that the starting assumption on the
function $M$ is only relevant for the definition of  the bilinear
mapping to make sense when acting on certain classes of ``nice"
functions. Then a density argument allows to extend functions
belonging to Lebesgue spaces. We would like to point out the
following observation.

\begin{nota} If $M_n\in \mul$ are functions such that $M_n(x)\to
M(x)$ a.e and   $\sup_{n}\|M_n\|<\infty$ then $M\in \mul$ and
$\|M\|_{p_1,p_2,p_3}\le\sup_{n}\|M_n\|_{p_1,p_2,p_3}$.

Indeed, this  fact follows from Fatou's lemma, since
$$
\|B_M(f,g)\|_{p_3}\le \liminf \|B_{M_n}(f,g)\|_{p_3}\le
\sup_{n}\|M_n\|_{p_1,p_2,p_3}\|f\|_{p_1}\|g\|_{p_2}.$$
\end{nota}

\begin{nota} The case $M(x)=\frac{1}{|x|^{1-\alpha}}$ (and even the
$n$-dimensional case) corresponds to the bilinear fractional
integral. This was first shown by C. Kenig and E. Stein in
\cite{KS} to belong to $\mul$ for any $1<p_1,p_2<\infty$,
$0<\alpha<1/p_1+1/p_2$ and $1/p_1+1/p_2=1/p_3-\alpha$.
 Another  very important and non trivial example is the bilinear Hilbert
transform, given by $M(x)=-i sign(x)$, which was shown by M. Lacey
and C.Thiele in \cite{LT1, LT2, LT3} to belong to $\mul$ for any
$1<p_1,p_2<\infty$, $1/p_1+1/p_2=1/p_3$ and $p_3> 2/3$. These
results were extended to other cases  in \cite{GK} and \cite{GN1,
GN2} respectively.
\end{nota}
We start reformulating  the definition of this class of bilinear
multipliers.

\begin{prop} Let $M\in L^1_{loc}(\R)$, $f, g\in {\mathcal P}(\R)$.
Then \be \label{exp1} B_M(f,g)(x)=\frac{1}{2}\int_{\R^2} \hat
f(\frac{u+v}{2})\hat g(\frac{u-v}{2})
 M(v)e^{2\pi i u x}du dv\ee

 \begin{equation}\label{f1}B_M(f,g)(-x)= \int_{\R}
(\widehat{\tau_x g}*M)(\xi) \widehat{\tau_x
f}(\xi)d\xi.\end{equation}

\begin{equation}\label{f2}\widehat{B_M(f,g)}(x)= \frac{1}{2}C_{M}(\widehat{D^1_{1/2} f}, \widehat{D^1_{1/2} g}) (x).\end{equation}
\end{prop}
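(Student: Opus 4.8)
The plan is to prove the three identities in turn, each time starting from the defining double integral
$$B_M(f,g)(x)=\int_{\R^2}\hat f(\xi)\hat g(\eta)M(\xi-\eta)e^{2\pi i(\xi+\eta)x}\,d\xi\,d\eta$$
and performing one explicit change of variables together with Fubini's theorem. Since $f,g\in\mathcal{P}(\R)$ the transforms $\hat f,\hat g$ lie in $C_{00}(\R)$, so every integrand occurring below is compactly supported in $(\xi,\eta)$ (equivalently in $(u,v)$), and because $M\in L^1_{loc}(\R)$ all the resulting integrals are absolutely convergent over compact sets; this makes every interchange of order and every application of Fourier inversion legitimate without additional justification. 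For (\ref{exp1}) I would simply apply the linear substitution $u=\xi+\eta,\ v=\xi-\eta$, with inverse $\xi=\frac{u+v}{2},\ \eta=\frac{u-v}{2}$ and Jacobian of absolute value $\tfrac12$; inserting $M(\xi-\eta)=M(v)$ and $e^{2\pi i(\xi+\eta)x}=e^{2\pi iux}$ yields the claimed formula directly. This step is purely routine.

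For (\ref{f1}) I would recall from (\ref{basic}) that $\widehat{\tau_x h}(\xi)=M_{-x}\hat h(\xi)=e^{-2\pi i x\xi}\hat h(\xi)$. Writing out the convolution and substituting $\eta=\xi-t$ inside it,
$$(\widehat{\tau_x g}*M)(\xi)=\int_\R e^{-2\pi i x(\xi-t)}\hat g(\xi-t)M(t)\,dt=\int_\R e^{-2\pi i x\eta}\hat g(\eta)M(\xi-\eta)\,d\eta.$$
Multiplying by $\widehat{\tau_x f}(\xi)=e^{-2\pi i x\xi}\hat f(\xi)$, integrating in $\xi$, and using Fubini then gives
$$\int_\R(\widehat{\tau_x g}*M)(\xi)\,\widehat{\tau_x f}(\xi)\,d\xi=\int_{\R^2}\hat f(\xi)\hat g(\eta)M(\xi-\eta)e^{-2\pi i x(\xi+\eta)}\,d\xi\,d\eta,$$
which is exactly the defining integral for $B_M(f,g)$ evaluated at $-x$, so (\ref{f1}) follows.

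For (\ref{f2}), the main step, I would read the already-proved identity (\ref{exp1}) as asserting that, as a function of $x$, $B_M(f,g)$ is the inverse Fourier transform of
$$G(u)=\frac12\int_\R\hat f\Big(\tfrac{u+v}{2}\Big)\hat g\Big(\tfrac{u-v}{2}\Big)M(v)\,dv.$$
Since $G$ is continuous with compact support, Fourier inversion gives $\widehat{B_M(f,g)}=G$. It then remains to identify $G$ with the right-hand side of (\ref{f2}). Using the dilation rule in (\ref{basic}) one checks $\widehat{D^1_{1/2}f}(\xi)=\hat f(\xi/2)$ and likewise for $g$, so that by the definition (\ref{CK}) of $C_K$,
$$\tfrac12\,C_M\big(\widehat{D^1_{1/2}f},\widehat{D^1_{1/2}g}\big)(x)=\tfrac12\int_\R\hat f\Big(\tfrac{x-y}{2}\Big)\hat g\Big(\tfrac{x+y}{2}\Big)M(y)\,dy,$$
and one compares this with $G(x)$ via the substitution exchanging $v$ and $y$. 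The delicate point, and the one I expect to require the most care, is exactly this final change of variables: passing from the $\bigl(\tfrac{u+v}{2},\tfrac{u-v}{2}\bigr)$ pattern of $G$ to the $\bigl(\tfrac{x-y}{2},\tfrac{x+y}{2}\bigr)$ pattern of $C_M$ forces a reflection $v\mapsto-v$, so that the orientation with which the kernel enters (whether $M$ or $M(-\cdot)$) must be tracked scrupulously against the $f(x-y)g(x+y)$ convention in (\ref{CK}); the even/odd behaviour of $M$ is the only genuine subtlety in what is otherwise a mechanical computation.
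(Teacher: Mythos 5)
Your proof follows the same route as the paper's: (\ref{exp1}) by the linear substitution $u=\xi+\eta$, $v=\xi-\eta$ with Jacobian $\tfrac12$; (\ref{f1}) by writing out the convolution, using $\widehat{\tau_x h}=M_{-x}\hat h$, and applying Fubini (you run the computation from right to left, the paper from left to right); and (\ref{f2}) by reading (\ref{exp1}) as an inverse Fourier transform, invoking inversion, and identifying the resulting function with a $C_M$ expression. Parts (\ref{exp1}) and (\ref{f1}) are complete and correct as you wrote them.

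For (\ref{f2}) you stop just short of the finish line, and you should not: carrying out the comparison you describe actually settles the matter, and in an instructive way. With $F_1(\xi)=\widehat{D^1_{1/2}f}(\xi)=\hat f(\xi/2)$ and $F_2(\xi)=\widehat{D^1_{1/2}g}(\xi)=\hat g(\xi/2)$ (your computation of these is right), inversion gives $\widehat{B_M(f,g)}(u)=\frac12\int_\R F_1(u+v)F_2(u-v)M(v)\,dv$, while $\frac12 C_M(F_1,F_2)(u)=\frac12\int_\R F_1(u-v)F_2(u+v)M(v)\,dv$. The substitution $v\mapsto-v$ converts one into the other only at the price of replacing $M$ by $M(-\cdot)$; equivalently, the identity valid for \emph{all} $M$ is $\widehat{B_M(f,g)}=\frac12\,C_M\bigl(\widehat{D^1_{1/2}g},\widehat{D^1_{1/2}f}\bigr)$, with the two arguments in the opposite order, or (\ref{f2}) with kernel $M(-\cdot)$. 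So (\ref{f2}) as printed is literally true only for even $M$ --- fine for $1/|x|^{1-\alpha}$, false for the odd symbol $-i\,\mathrm{sign}(x)$ --- and your scruple detects a genuine slip that the paper's own proof glosses over: there the inner integral of (\ref{exp1}) is simply declared to equal $C_M(D^\infty_{1/2}\hat f,D^\infty_{1/2}\hat g)(u)$ with no tracking of orientation (and, incidentally, with $D^\infty_{1/2}$ written where the paper's own convention $D^p_tf(x)=t^{-n/p}f(x/t)$ calls for $D^\infty_2$). Your write-up would be complete, and sharper than the paper's, if you finished the substitution and stated the corrected form rather than leaving the sign as an unresolved caveat.
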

\begin{proof} (\ref{exp1})  follows changing  variables.

 To show (\ref{f1}) observe that
\ba B_M(f,g)(-x)&=&\int_{\R^2} \widehat {\tau_xf}(\xi)\widehat
{\tau_x g}(\eta)
 M(\xi-\eta)d\xi d\eta \\
 &=&\int_{\R} (\int_\R\widehat
{\tau_x g}(\eta)
 M(\xi-\eta)d\eta) \widehat {\tau_xf}(\xi)d\xi\\
 &=&\int_{\R}
(\widehat{\tau_x g}*M)(\xi) \widehat{\tau_x f}(\xi)d\xi
 \ea

 Finally, using (\ref{exp1}), we have \ba B_M(f,g)(x)
&=& \frac{1}{2}\int_{\R}\Big(\int_\R\hat f(\frac{u+v}{2}) \hat
g(\frac{u-v}{2})
M(v) dv\Big) e^{2\pi i ux}dv\\
&=& \frac{1}{2}\int_{\R} C_{M}(D^\infty_{1/2}\hat f,
D^\infty_{1/2}\hat g)(u)
 e^{2\pi i ux}du .\\
\ea  This implies (\ref{f2}).
\end{proof}

 For symbols $M$ which are integrable we can write  $B_M$ in terms
 $C_K$ for some  kernel $K$.

\begin{prop}  \label{igual} Let  $M\in  L^1(\R)$ and set $K(t)=\hat M(-t)$. Then $B_M=
C_K$, i.e
$$B_M(f,g)= \int_\R f(x-t)g(x+t)K(t)dt$$
\end{prop}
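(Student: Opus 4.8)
The plan is to reduce the claimed identity to the multiplication (Parseval) formula for the $L^1$ function $M$, thereby avoiding any appeal to the integrability of $K$ itself. Fix $f,g\in\mathcal{P}(\R)$ and $x\in\R$, and set $h_x(t)=f(x-t)g(x+t)$. Since $f,g\in\mathcal{P}(\R)\subset\mathcal{S}(\R)$, the function $t\mapsto h_x(t)$ belongs to $\mathcal{S}(\R)$, and since $M\in L^1(\R)$ its transform $K=\hat M(-\cdot)$ is continuous and bounded (Riemann--Lebesgue). Hence the integral
$$C_K(f,g)(x)=\int_\R f(x-t)g(x+t)\hat M(-t)\,dt=\int_\R \hat M(-t)\,h_x(t)\,dt$$
converges absolutely and is well defined. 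The goal is to carry the Fourier transform off $M$ and recognize the result as $B_M(f,g)(x)$.

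First I would substitute $r=-t$ and apply the multiplication formula $\int_\R \hat M(r)\psi(r)\,dr=\int_\R M(r)\hat\psi(r)\,dr$, which is valid because $M\in L^1(\R)$ and $\psi(r)=h_x(-r)\in\mathcal{S}(\R)\subset L^1(\R)$. Since $\hat\psi(s)=\widehat{h_x(-\cdot)}(s)=\hat{h_x}(-s)$, this yields
$$C_K(f,g)(x)=\int_\R M(s)\,\hat{h_x}(-s)\,ds.$$
Using the multiplication formula here is precisely what lets me bypass the fact that $K$ need not lie in $L^1(\R)$, so no direct Fubini interchange involving $K$ is required.

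Next I would compute $\hat{h_x}$ explicitly. The two factors have transforms $\widehat{f(x-\cdot)}(\sigma)=e^{-2\pi i\sigma x}\hat f(-\sigma)$ and $\widehat{g(x+\cdot)}(\sigma)=e^{2\pi i\sigma x}\hat g(\sigma)$, so convolving them gives $\hat{h_x}(\sigma)=\int_\R \hat f(-\tau)\hat g(\sigma-\tau)e^{2\pi i(\sigma-2\tau)x}\,d\tau$; all these integrals are harmless since $\hat f,\hat g\in C_{00}(\R)$. Setting $\sigma=-s$ and substituting $\eta=-s-\tau$ produces
$$\hat{h_x}(-s)=e^{2\pi i s x}\int_\R \hat f(s+\eta)\hat g(\eta)\,e^{4\pi i\eta x}\,d\eta.$$
Inserting this into the previous display and changing variables $\xi=s+\eta$ (for fixed $\eta$), so that $s+2\eta=\xi+\eta$, gives exactly
$$C_K(f,g)(x)=\int_{\R^2} \hat f(\xi)\hat g(\eta)\,M(\xi-\eta)\,e^{2\pi i(\xi+\eta)x}\,d\xi\,d\eta=B_M(f,g)(x),$$
as desired.

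The only delicate point, which I regard as the main obstacle, is that $K=\hat M(-\cdot)$ is in general merely bounded rather than integrable, so one cannot simply quote the computation from the Introduction (which was carried out for $K\in L^1(\R)$) nor invoke Fourier inversion on $K$ directly. This is resolved by exploiting that $f,g$ are Schwartz functions, so that $h_x\in\mathcal{S}(\R)$ and the multiplication formula applies cleanly with $M\in L^1(\R)$; every interchange above then reduces to integration of a compactly supported continuous function against an $L^1$ or Schwartz function. One may note in passing that formally $\hat K=M$ by Fourier inversion, so the statement is the identity $C_K=B_{\hat K}$ of the Introduction, but the argument given provides the rigorous justification in the present generality.
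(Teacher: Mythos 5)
Your proof is correct, and it takes a genuinely different route from the paper's. The paper runs the computation in the direction you deliberately avoid: it expands $f(x-t)$ and $g(x+t)$ inside $C_K(f,g)(x)$ by Fourier inversion, interchanges the $t$-integral with the $(\xi,\eta)$-integrals, and then identifies the inner integral $\int_{\R}\hat M(t)e^{2\pi i(\xi-\eta)t}\,dt$ with $M(\xi-\eta)$, i.e.\ it applies pointwise Fourier inversion to $M$ itself. As literally written, those steps presume more than the hypothesis provides: the triple integral is not absolutely convergent unless $\hat M\in L^1(\R)$, and the pointwise inversion of $M$ likewise requires $\hat M\in L^1(\R)$ or a summability/limiting interpretation (e.g.\ damping by a Gaussian and passing to the limit). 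Your argument replaces both steps by the $L^1$ multiplication formula applied to the single Schwartz function $h_x(t)=f(x-t)g(x+t)$: the Fourier transform is moved off $M$ onto $h_x$, every interchange becomes an absolutely convergent Fubini (since $\hat f,\hat g$ are bounded with compact support and $M\in L^1(\R)$), and the convolution formula for $\hat h_x$ together with your changes of variables --- which check out --- produces exactly the defining double integral of $B_M(f,g)(x)$. What the paper's version buys is brevity: it is the transparent formal identity, essentially the same computation as the one given in the Introduction for $K\in L^1(\R^n)$. What yours buys is a proof that is rigorous under the stated hypothesis $M\in L^1(\R)$ alone, which addresses precisely the issue you flagged, namely that $K=\hat M(-\cdot)$ is bounded and continuous but need not be integrable.
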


\begin{proof}  \ba C_K(f,g)(x)&=&\int_\R f(x-t)g(x+t)\hat
M(-t)dt\\
&=&\int_\R (\int_{\R^2} \hat f(\xi)\hat g(\eta)e^{2\pi i
(x-t)\xi}e^{2\pi i(x+t)\eta}d\xi d\eta )\hat M(-t)dt\\
&=&\int_{\R^2} \hat f(\xi)\hat g(\eta) (\int_\R \hat M(t)e^{2\pi i
(\xi-\eta)t}dt) e^{2\pi i (\xi+\eta)x}d\xi d\eta \\
&=& B_M(f,g)(x).\ea
\end{proof}

This class does have much richer properties than $\MBR$. As above
use the notation $f_t(x)=D^1_tf(x)=\frac{1}{t}f(\frac{x}{t})$ for
a function $f$ defined in $\R$. The following facts are immediate.

\be \label{tras} \tau_y B_M(f,g)= B_M(\tau_y f, \tau_y g) , y\in
\R.\ee

\be \label{mod} M_{2y} B_M(f,g)= B_M(M_y f, M_y g) , y\in \R. \ee

\be \label{dil} (B_M(f,g))_t= B_{D^1_{t^{-1}}M}( f_t,  g_t) ,
t>0.\ee

When specializing the properties obtained for $m(\xi,\eta)$ to the
case $M(\xi-\eta)$ we get the following facts:

\be \label{tras1} B_M(\tau_{-y} f, \tau _{y} g)= B_{M_y M}(f,  g)
, y\in \R.\ee

\be \label{mod1}  B_M(M_y f,M_{-y} g)= B_{\tau_{2y}M}(f, g) , y\in
\R. \ee

For $ \frac{1}{q}= \frac{1}{p_1}+\frac{1}{p_2}-\frac{1}{p_3}$ we
have \be \label{dil2} B_M(D^{p_1}_tf,D^{p_2}_tg)=
D^{p_3}_tB_{D^q_{t}M}( f, g) , t>0.\ee

As in the previous section we can  generate  new multipliers in
$\mul$.

\begin{prop}  \label{propi2} Let $p_3\ge 1$, $\phi\in L^1(\R)$ and $M\in
\mul$. Then

\begin{enumerate}[\sf \ \ \ (a)]
\item
$\phi*M\in \mul$ and $\|\phi*M\|_{p_1,p_2,p_3}\le \|\phi\|_1
\|M\|_{p_1,p_2,p_3}.$
\item   $\hat\phi M\in \mul$ and
 $\|\hat\phi M\|_{p_1,p_2,p_3}\le \|\phi\|_1
\|M\|_{p_1,p_2,p_3}.$

\item If $\psi\in L^1(\R^+, t^{\frac{1}{p_3}-(\frac{1}{p_1}+\frac{1}{p_2})})$  then $M_\psi(\xi)=\int_0^\infty M(t\xi)\psi(t)dt \in \mul$.
Moreover $\|M_\psi\|_{p_1,p_2,p_3}\le \|\psi\|_1
\|M\|_{p_1,p_2,p_3}.$
\end{enumerate}

\end{prop}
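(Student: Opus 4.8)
The plan is to reduce the one-variable statements to the two-variable results of Proposition \ref{convo}, exploiting the fact that $M_\psi$ and $\hat\phi M$ are nothing but specializations of the operations on $m(\xi,\eta)$ when we set $m(\xi,\eta)=M(\xi-\eta)$. The key observation is that the map $M\mapsto m$, $m(\xi,\eta)=M(\xi-\eta)$, intertwines the relevant operations: convolution of $M$ by $\phi$ on $\R$ corresponds to convolution of $m$ by a degenerate measure on $\R^2$ supported on the antidiagonal, multiplication by $\hat\phi$ corresponds to modulation, and dilation of $M$ corresponds to simultaneous dilation of $m$ in both variables.

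\medskip

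For part (a), I would first record the elementary identity $(\phi*M)(\xi-\eta) = \int_\R M(\xi-\eta-u)\phi(u)\,du = \int_\R (\tau_{(u,u)}m)(\xi,\eta)\phi(u)\,du$, where $\tau_{(u,u)}$ is translation on $\R^2$ along the diagonal. Plugging this into the definition of $B_{\phi*M}$ and interchanging the order of integration (justified for $f,g\in\Pol(\R)$ since all integrands are nice and compactly supported in frequency) gives
\begin{equation}\label{prop2a}
B_{\phi*M}(f,g)(x)=\int_\R B_{\tau_{(u,u)}m}(f,g)(x)\,\phi(u)\,du.
\end{equation}
Now $\tau_{(u,u)}m$ corresponds to the shifted symbol $M(\cdot-u)=\tau_u M$, and by Proposition \ref{p1}(b) we have $\|B_{\tau_{(u,u)}m}\|=\|B_m\|=\|M\|_{p_1,p_2,p_3}$. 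Applying the vector-valued Minkowski inequality to \eqref{prop2a} then yields $\|B_{\phi*M}(f,g)\|_{p_3}\le \|\phi\|_1\|M\|_{p_1,p_2,p_3}\|f\|_{p_1}\|g\|_{p_2}$, exactly as in the proof of Proposition \ref{convo}(b). Part (b) is analogous: since $\widehat{\phi}(\xi-\eta)M(\xi-\eta)$ is obtained by modulating $m$ through $M_{(-u,-u)}m$ against the weight $\phi(u)$, one invokes Proposition \ref{p1}(c) in place of (b). Part (c) is the diagonal version of Proposition \ref{convo}(d): writing $M(t\xi)$ as $M(t\xi-t\eta)$ evaluated on the antidiagonal, one sees $m_\psi(\xi,\eta)=M_\psi(\xi-\eta)$ with the same weight $\psi$, and the condition $\psi\in L^1(\R^+,t^{1/p_3-(1/p_1+1/p_2)})$ is precisely the integrability needed to apply Proposition \ref{p1}(d) under the integral sign.

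\medskip

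The main obstacle I anticipate is not any single estimate — each follows the template of Proposition \ref{convo} — but rather verifying that the integral formulas such as \eqref{prop2a} are legitimate when the two-variable symbol $m(\xi,\eta)=M(\xi-\eta)$ is genuinely degenerate (constant along antidiagonals), so that one is really applying the \emph{diagonal} restrictions $\tau_{(u,u)}$, $M_{(-u,-u)}$, and $D^q_t$ acting identically in both variables rather than the general two-variable operators. One should check that these diagonal operations preserve the class $\mul$ with the same norm bound, which is immediate from the corresponding parts of Proposition \ref{p1} once the Fubini interchange is justified. Since $f,g\in\Pol(\R)$ guarantees $\hat f,\hat g$ are continuous with compact support, all the integrands are absolutely integrable over the relevant product domains, so the interchange is harmless and the reduction goes through cleanly.
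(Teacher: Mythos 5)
Your overall strategy --- specializing the two-variable operations of Proposition \ref{p1} to symbols of the form $m(\xi,\eta)=M(\xi-\eta)$ --- is sound and is essentially what the paper does, but the two key identities you write for (a) and (b) are false, and the error is conceptual rather than typographical: you have the diagonal and the antidiagonal interchanged. Since $m(\xi,\eta)=M(\xi-\eta)$ is \emph{invariant} under simultaneous translation of both variables, one has $\tau_{(u,u)}m=m$ for every $u$, so the right-hand side of your identity for part (a) equals
$$\int_\R B_{\tau_{(u,u)}m}(f,g)(x)\,\phi(u)\,du=\Big(\int_\R \phi(u)\,du\Big)\,B_M(f,g)(x)=\hat\phi(0)\,B_M(f,g)(x),$$
which is not $B_{\phi*M}(f,g)(x)$. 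The translation that realizes $\tau_u M$ at the level of $m$ must be transverse to the diagonal: $\tau_{(u,0)}m(\xi,\eta)=M(\xi-\eta-u)$ (or equivalently $\tau_{(u/2,-u/2)}m$). With that correction, Proposition \ref{p1}(b) plus Minkowski does give (a), and this is exactly the paper's proof, which writes $B_{\phi*M}(f,g)=\int_\R M_uB_M(M_{-u}f,g)\,\phi(u)\,du$. Likewise in (b): $M_{(-u,-u)}m(\xi,\eta)=e^{-2\pi iu(\xi+\eta)}M(\xi-\eta)$, so integrating against $\phi(u)$ produces the symbol $\hat\phi(\xi+\eta)M(\xi-\eta)$ --- that is Proposition \ref{mix}, a different statement --- rather than $\hat\phi(\xi-\eta)M(\xi-\eta)$. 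The modulation you need has opposite signs in the two variables, $M_{(-u,u)}$, which via (\ref{tras1}) is the paper's identity $B_{\hat\phi M}(f,g)=\int_\R B_M(\tau_uf,\tau_{-u}g)\,\phi(u)\,du$.

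The root of the problem is the sentence in your last paragraph asserting that the relevant operations are the ``diagonal restrictions $\tau_{(u,u)}$, $M_{(-u,-u)}$'': for symbols depending only on $\xi-\eta$, diagonal translations act trivially and diagonal modulations produce factors depending on $\xi+\eta$, so neither can encode convolution or multiplication in the variable $\xi-\eta$. Only part (c) is genuinely diagonal: $m(t\xi,t\eta)=M(t(\xi-\eta))$, so $m_\psi(\xi,\eta)=M_\psi(\xi-\eta)$, and your reduction to Proposition \ref{convo}(d) / Proposition \ref{p1}(d) is correct, weight and all. As written, parts (a) and (b) do not prove the proposition (the first proves a triviality, the second proves Proposition \ref{mix}); they become correct, and coincide with the paper's argument, once $\tau_{(u,u)}$ is replaced by $\tau_{(u,0)}$ and $M_{(-u,-u)}$ by $M_{(-u,u)}$.
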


\begin{proof}
(a) Apply  Minkowski's inequality to the following fact: \ba
B_{\phi*M}(f,g)(x)&=& \int_{\R^2} \hat f(\xi)\hat g(\eta)
 (\int_\R M(\xi-\eta-u)\phi(u)du)e^{2\pi i (\xi+\eta)x}d\xi d\eta\\
 &=& \int_\R (\int_{\R^2} \widehat {M_{-u}f}(\xi )\hat g(\eta)
  M(\xi-\eta)e^{2\pi i (\xi+\eta)x}d\xi d\eta)e^{2\pi i u
  x}\phi(u)du\\
  &=& \int_\R M_u B_M( M_{-u}f,g)(x)\phi(u)du.
 \ea
 (b) Observe that\ba B_{\hat \phi  m}(f,g)(x)&=&\int_{\R^2} \hat
f(\xi) \hat g(\eta) (\int_{\R} (M_{-u}m)(\xi-\eta)
\phi(u)du)e^{2\pi
i(\xi+\eta)x}d\xi d\eta\\
&=&\int_{\R^2}  B_{M_{-u}m}(f,g)(x)\phi(u)du.\\
\ea

Use now Minkowski's again and (\ref{tras1}).

(c) Write $\frac{1}{p_3}-(\frac{1}{p_1}+\frac{1}{p_2})=
-\frac{1}{q}$,   \ba B_{M_\psi}(f,g)(x)&=&\int_{\R^2} \hat f(\xi)
\hat g(\eta) (\int_{0}^\infty D^q_{t^{-1}}M(\xi) t^{-1/q} \psi(t)
dt)e^{2\pi
i(\xi+\eta)x}d\xi d\eta\\
&=&\int_{0}^\infty  B_{D^q_{t^{-1}}M}(f,g)(x)t^{-1/q} \psi(t) dt.
\ea The result follows from (\ref{dil2}) and Minkowski's again.
\end{proof}

\begin{prop}  \label{mix} Let $p_3\ge 1$, $\phi\in L^1(\R)$ and $M\in
\mul$. Then $m(\xi,\eta)= M(\xi-\eta)\hat \phi(\xi+\eta)\in \MBR$
and $\|m\|_{p_1,p_2,p_3}\le \|\phi\|_1 \|M\|_{p_1,p_2,p_3}.$
\end{prop}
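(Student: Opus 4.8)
The plan is to show that the extra factor $\hat\phi(\xi+\eta)$ acts as a convolution operator in the output variable $x$, thereby reducing the desired estimate to the one already assumed for $M$. First I would expand
$\hat\phi(\xi+\eta)=\int_\R \phi(t)\,e^{-2\pi i(\xi+\eta)t}\,dt$
inside the defining integral for $B_m(f,g)(x)$. The crucial structural observation is that $\xi+\eta$ is precisely the frequency dual to the output variable $x$, so the two exponential factors combine into $e^{2\pi i(\xi+\eta)(x-t)}$.

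Next I would interchange the order of integration. This is legitimate because $f,g\in{\mathcal P}(\R)$ forces $\hat f\,\hat g$ to be continuous and supported in a fixed compact set $K\subset\R^2$, on which $M(\xi-\eta)$ is integrable (as $M\in L^1_{loc}$), while $\phi\in L^1(\R)$; hence the integrand is absolutely integrable over $K\times\R$ and Fubini applies. This yields the identity
$$B_m(f,g)(x)=\int_\R \phi(t)\Big(\int_{\R^2}\hat f(\xi)\hat g(\eta)M(\xi-\eta)e^{2\pi i(\xi+\eta)(x-t)}d\xi\,d\eta\Big)dt=\int_\R \phi(t)\,B_M(f,g)(x-t)\,dt,$$
that is, $B_m(f,g)=\phi*B_M(f,g)$.

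Once this identity is in hand the bound is immediate. Since $p_3\ge 1$, Young's convolution inequality gives
$$\|B_m(f,g)\|_{p_3}=\|\phi*B_M(f,g)\|_{p_3}\le \|\phi\|_1\,\|B_M(f,g)\|_{p_3}\le \|\phi\|_1\,\|M\|_{p_1,p_2,p_3}\,\|f\|_{p_1}\|g\|_{p_2},$$
where the final inequality uses $M\in\mul$. This establishes both $m\in\MBR$ and the claimed norm estimate.

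The main (and essentially the only) obstacle is recognizing that $\hat\phi(\xi+\eta)$ is dual to the \emph{output} variable rather than to $\xi$ or $\eta$ separately; this is exactly what turns the symbol multiplication into a plain convolution of the bilinear output with $\phi$, and it is also what forces the hypothesis $p_3\ge 1$, so that Young's inequality can be applied with the $L^1$ factor $\phi$. Everything else is a routine Fubini justification.
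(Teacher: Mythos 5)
Your proof is correct and follows exactly the paper's own argument: expand $\hat\phi(\xi+\eta)$ as an integral, apply Fubini to obtain the identity $B_m(f,g)=\phi*B_M(f,g)$, and conclude with Young's inequality (which is where $p_3\ge 1$ enters). Your additional justification of the Fubini step via the compact support of $\hat f\,\hat g$ is a welcome detail the paper leaves implicit.
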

\begin{proof} Apply Young's inequality to the following fact: \ba
B_{m}(f,g)(x)&=& \int_{\R^2} \hat f(\xi)\hat g(\eta)
 M(\xi-\eta)(\int_\R \phi(y)e^{-2\pi i (\xi+ \eta)y} dy)e^{2\pi i (\xi+\eta)x}d\xi d\eta\\
 &=& \int_\R (\int_{\R^2} \hat f(\xi )\hat g(\eta)
  M(\xi-\eta)e^{2\pi i (\xi+\eta)(x-y)}d\xi d\eta)\phi(y)dy\\
  &=& \phi* B_M(f,g)(x).
 \ea
\end{proof}

Let  us show that the classes $\mul$ are reduced to $\{0\}$ for
some values of the parameters.

\begin{teor} Let $p_3\ge 1$ such that
$\frac{1}{p_1}+\frac{1}{p_2}<\frac{1}{p_3}$. Then $\mul=\{0\}$.
\end{teor}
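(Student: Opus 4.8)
The plan is to argue by contradiction, exploiting the simultaneous translation invariance (\ref{tras}) in the spirit of the duality criterion of Proposition \ref{dual}. Suppose $M\in\mul$ is nonzero; since $\Pol(\R)$ is dense it then suffices to find $f,g\in\Pol(\R)$ with $h:=B_M(f,g)\neq0$ and derive a contradiction. Fix such $f,g$, and choose $w\in{\mathcal S}(\R)$ with $\int_\R h(x)\overline{w(x)}\,dx=1$. I would then test the pairing against widely spaced translates: for a rapidly increasing sequence $x_1<\dots<x_N$ set $F=\sum_k\tau_{x_k}f$, $G=\sum_k\tau_{x_k}g$, $W=\sum_k\tau_{x_k}w$, and examine
\[
\mathcal I_N=\int_\R B_M(F,G)(x)\,\overline{W(x)}\,dx=\sum_{k,j,l}\int_\R B_M(\tau_{x_k}f,\tau_{x_j}g)(x)\,\overline{\tau_{x_l}w(x)}\,dx .
\]

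By (\ref{tras}) one has $B_M(\tau_{x_k}f,\tau_{x_j}g)=\tau_{x_k}B_M(f,\tau_{x_j-x_k}g)$, so each diagonal term $k=j=l$ equals $\int_\R h\overline w=1$, contributing exactly $N$. The crux is that every off-diagonal term tends to $0$ as the gaps grow. Writing such a term through Parseval, it takes the form $\int_\R I_s(u)\,e^{2\pi i(r-s)u}\,\overline{\hat w(u)}\,du$ with $s=x_j-x_k$, $r=x_l-x_k$, where $I_s(u)=\int_\R\hat f(\xi)\hat g(u-\xi)M(2\xi-u)e^{2\pi i s\xi}\,d\xi$. Since $\hat f\in C_{00}(\R)$ while $\hat g$ and $M$ are bounded, the integrand defining $I_s$ is integrable with compact support, so by the Riemann--Lebesgue lemma $\|I_s\|_{L^1(du)}\to0$ as $|s|\to\infty$; and when $s=0$ but $r\neq0$ the term is a Fourier coefficient of the compactly supported $L^1$ function $I_0\,\overline{\hat w}$, which vanishes as $|r|\to\infty$. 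As there are only finitely many off-diagonal terms for fixed $N$, I can choose the gaps so large (inductively, each far exceeding the previous) that their total is $<1$, whence $|\mathcal I_N|\ge N-1$.

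For the upper bound I use that, for sufficiently separated translates of Schwartz functions, $\|F\|_{p_1}\le 2N^{1/p_1}\|f\|_{p_1}$, $\|G\|_{p_2}\le 2N^{1/p_2}\|g\|_{p_2}$ and $\|W\|_{p_3'}\le 2N^{1/p_3'}\|w\|_{p_3'}$, the cross contributions to these $L^p$ norms being negligible for large gaps. Combining boundedness of $B_M$ with H\"older,
\[
N-1\le|\mathcal I_N|\le\|M\|_{p_1,p_2,p_3}\,\|F\|_{p_1}\|G\|_{p_2}\|W\|_{p_3'}\le C\,\|M\|_{p_1,p_2,p_3}\,N^{\frac1{p_1}+\frac1{p_2}+\frac1{p_3'}} .
\]
Since $\frac1{p_3'}=1-\frac1{p_3}$, the hypothesis $\frac1{p_1}+\frac1{p_2}<\frac1{p_3}$ yields $\frac1{p_1}+\frac1{p_2}+\frac1{p_3'}<1$, so the right-hand side grows strictly slower than $N$. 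Letting $N\to\infty$ is impossible, so $h=B_M(f,g)=0$ for all $f,g\in\Pol(\R)$, i.e. $M=0$.

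The genuine obstacle, and the point at which the bilinear case differs from the linear one, is that $B_M(F,G)$ carries $O(N^3)$ cross terms which cannot be shown to be small in $L^{p_3}$ when $p_3<2$. The device that circumvents this is precisely to pass to the scalar pairing, in which each cross term is a single oscillatory integral decaying by Riemann--Lebesgue; this is what makes the argument uniform over the whole range $p_3\ge1$. The only routine points left to verify are the asymptotics $\|F\|_{p_1}\sim N^{1/p_1}\|f\|_{p_1}$ for spread-out Schwartz translates and the compact support of $I_s$ in $u$, both of which follow from $\hat f,\hat g\in C_{00}(\R)$ and rapid decay.
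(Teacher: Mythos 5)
Your proof is correct, and it takes a genuinely different route from the paper's, although both share the same counting skeleton: spread $N$ translates far apart, use the invariance (\ref{tras}) to isolate a diagonal contribution of size $N$, and play this off against the multiplier bound, which under $\frac{1}{p_1}+\frac{1}{p_2}<\frac{1}{p_3}$ can only produce growth $N^{\frac{1}{p_1}+\frac{1}{p_2}+\frac{1}{p_3'}}$, of order strictly smaller than $N$. The implementations differ exactly at the obstacle you identify, namely the cross terms. The paper first reduces to $M\in L^1(\R)$ (via Proposition \ref{propi2}), so that by Proposition \ref{igual} the operator $B_M=C_K$ carries compactly supported $f,g$ to compactly supported output; it then averages over Rademacher signs, so the cross terms $B_M(\tau_{ky}f,\tau_{jy}g)$, $k\neq j$, are annihilated \emph{exactly} by orthonormality, and disjointness of the supports of the $\tau_{ky}B_M(f,g)$ gives the exact lower bound $(N+1)^{1/p_3}\|B_M(f,g)\|_{p_3}$, while Minkowski's integral inequality (the paper's step requiring $p_3\ge 1$) gives the upper bound. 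You instead keep $M$ completely general, pass to a scalar pairing against a third spread-out function $W$ (your step requiring $p_3\ge 1$, via H\"older), extract the diagonal by translation invariance, and make the $O(N^3)$ cross terms small by Riemann--Lebesgue decay, choosing the separations large in terms of $N$. What each buys: the randomization requires no oscillatory decay estimates and no norm asymptotics for sums of non-compactly-supported translates, since after the reduction all supports are honestly disjoint; your pairing requires no reduction to integrable symbols at all, which is a genuine advantage --- for $M\in L^1_{loc}(\R)$ and $\phi\in C_{00}(\R)$ the convolution $\phi*M$ need not be integrable, so the paper's opening line ``we may assume $M\in L^1$'' itself needs extra care, whereas your argument applies verbatim to every locally integrable $M$.

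Two small repairs are needed. First, you justify the decay of $\|I_s\|_{L^1(du)}$ by asserting that ``$\hat g$ and $M$ are bounded''; boundedness of $M$ is not part of the hypotheses (the fractional-integration symbol $|x|^{\alpha-1}$ is an unbounded member of these classes), but it is also not needed: since $\hat f,\hat g\in C_{00}(\R)$ and $M\in L^1_{loc}(\R)$, the function $(\xi,u)\mapsto \hat f(\xi)\hat g(u-\xi)M(2\xi-u)$ is integrable on $\R^2$ with compact support, so $I_s(u)\to 0$ for each $u$ by Riemann--Lebesgue, and $|I_s(u)|\le \int_\R|\hat f(\xi)\hat g(u-\xi)M(2\xi-u)|\,d\xi$ is an $L^1(du)$ majorant; dominated convergence then yields $\|I_s\|_{L^1(du)}\to 0$. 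Second, your last step passes from ``$B_M(f,g)=0$ for all $f,g\in\Pol(\R)$'' to ``$M=0$''; density of $\Pol(\R)$ in the Lebesgue spaces only gives that the operator is zero, and to recover $M=0$ a.e.\ one more line is needed: vanishing of $\widehat{B_M(f,g)}(u)=\int_\R \hat f(\xi)\hat g(u-\xi)M(2\xi-u)\,d\xi$ for all $u$, with $\hat g$ chosen identically one on a neighbourhood of $u-\mathrm{supp}\,\hat f$, forces $\int_\R \hat f(\xi)M(2\xi-u)\,d\xi=0$ for all $\hat f\in C_c^\infty(\R)$, hence $M=0$ a.e. The paper leaves the same identification implicit, so this is a completeness remark rather than a gap in your approach.
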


\begin{proof} Let $M\in \mul$. Using Proposition \ref{propi2} we have that $\phi*M \in \mul$ for any $\phi$ continuous with compact support.
Hence we may assume that $M\in  L^1(\R)$. Using Proposition
\ref{igual} one has that
$$B_M(f,g)(x)=\int_{(x+B_R)\cap
(-x+B_R)}f(x-t)g(x+t)\hat M(-t)dt$$ for any $f$ and $g$
continuous functions supported in a ball $B_R=\{|x|\le R\}$.
Therefore one concludes that $supp (B_M(f,g))\subset B_{2R}$ in
such a case. On the other hand  for any compactly supported
 function  $h$, $0<p< \infty$ and $y$ big enough one can say that $\| h \pm \tau_y
f\|_{p}= 2^{1/p}\|f\|_p$.

Consider $\{r_k\}$ the Rademacher system in $[0,1]$ and observe
that, for each $N\in \N$ and $y\in \R$, the orthonormality of the
system gives
$$ \int_0^1B_M(\sum_{k=0}^N r_k(t) \tau_{ky }f, \sum_{k=0}^N r_k(t) \tau_{ky }f) dt= \sum_{k=0}^N B_M(\tau_{ky }f,\tau_{ky }g)$$
Therefore, since $ \sum_{k=0}^N B_M(\tau_{ky }f,\tau_{ky
}g)=\sum_{k=0}^N \tau_{ky }B_M(f,g)$,  we conclude that  for $y$
big enough
$$ \|\sum_{k=0}^N \tau_{ky }B_M(f,g)\|_{p_3}^{p_3}= (N+1) \|B_M(f,g)\|^{p_3}_{p_3}.$$

On the other hand, for $p_3\ge 1$,
 \ba &&\|\int_0^1B_M(\sum_{k=0}^N
r_k(t) \tau_{ky }f,\sum_{k=0}^N r_k(t) \tau_{ky }g)
dt\|_{p_3}\\
&\le&\int_0^1\|B_M(\sum_{k=0}^N r_k(t) \tau_{ky }f,
\sum_{k=0}^N r_k(t) \tau_{ky }g) \|_{p_3}dt\\
&\le& \int\|B_M\|\|\sum_{k=0}^N r_k(t) \tau_{ky }f\|_{p_1}
\|\sum_{k=0}^N r_k(t) \tau_{ky }g) \|_{p_2}dt\\
&\le& \|B_M\|\sup_{0<t<1}\|\sum_{k=0}^N r_k(t) \tau_{ky }f\|_{p_1}
\sup_{0<t<1}\|\sum_{k=0}^N r_k(t) \tau_{ky }g \|_{p_2}\\
&\le& \|B_M\|(N+1)^{1/p_1}\|f\|_{p_1}
(N+1)^{1/p_2}\|g \|_{p_2}.\\
\ea

This implies that $(N+1)^{1/p_3} \|B_M(f,g)\|^{p_3}\le C
(N+1)^{1/p_1+1/p_2}\|f\|_{p_1}\|g \|_{p_2}.$ Hence $1/p_1+1/p_2\ge
1/p_3$.
\end{proof}

The following elementary lemma is quite useful to get necessary
conditions on multipliers.

\begin{lema} \label{ml} Let $M\in \mul$. If $\frac{1}{q}=\frac{1}{p_1}+\frac{1}{p_2}-\frac{1}{p_3}$   then there exists $C>0$ such that
$$|\int_\R e^{-\lambda^2 \xi^2}M(\xi) d\xi|\le C
\|M\|_{p_1,p_2,p_3}\lambda^{\frac{1}{q}-1}$$ for any $\lambda>0$.
\end{lema}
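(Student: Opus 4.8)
The plan is to test the bilinear operator $B_M$ on a well-chosen pair of explicit functions whose Fourier transforms are Gaussians, so that the integral $\int_\R e^{-\lambda^2\xi^2}M(\xi)\,d\xi$ appears naturally as a value of $B_M(f,g)$. Recall from (\ref{exp1}) that
$$B_M(f,g)(x)=\frac{1}{2}\int_{\R^2}\hat f\Big(\frac{u+v}{2}\Big)\hat g\Big(\frac{u-v}{2}\Big)M(v)e^{2\pi i u x}\,du\,dv.$$
The key observation is that if I can arrange $\hat f$ and $\hat g$ so that the product $\hat f(\frac{u+v}{2})\hat g(\frac{u-v}{2})$ factors as $($Gaussian in $v)\times(\text{something in }u)$, then setting $x=0$ collapses the $u$-integral and leaves exactly a constant multiple of $\int_\R e^{-\lambda^2 v^2}M(v)\,dv$.

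Concretely, I would take $f$ and $g$ to be Gaussians (or dilates of a fixed Gaussian), since Gaussians are eigen-shaped under the Fourier transform: $\hat f$ and $\hat g$ are again Gaussians. With $\hat f(\xi)=e^{-a\xi^2}$ and $\hat g(\xi)=e^{-a\xi^2}$ for a suitable parameter $a$ depending on $\lambda$, the cross term becomes
$$\hat f\Big(\tfrac{u+v}{2}\Big)\hat g\Big(\tfrac{u-v}{2}\Big)=\exp\!\Big(-a\tfrac{(u+v)^2}{4}-a\tfrac{(u-v)^2}{4}\Big)=\exp\!\Big(-\tfrac{a}{2}u^2\Big)\exp\!\Big(-\tfrac{a}{2}v^2\Big),$$
which factors cleanly. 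Choosing $a$ so that $\frac{a}{2}=\lambda^2$ (i.e. $a=2\lambda^2$) gives the desired Gaussian $e^{-\lambda^2 v^2}$ in the $v$-variable. Evaluating at $x=0$ then yields
$$B_M(f,g)(0)=\frac{1}{2}\Big(\int_\R e^{-\lambda^2 u^2}\,du\Big)\Big(\int_\R e^{-\lambda^2 v^2}M(v)\,dv\Big)=\frac{c}{\lambda}\int_\R e^{-\lambda^2 v^2}M(v)\,dv$$
for an absolute constant $c$, using $\int_\R e^{-\lambda^2 u^2}\,du=\sqrt{\pi}/\lambda$.

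The remaining step is to bound $|B_M(f,g)(0)|$ using the multiplier norm and to track how the $L^{p_i}$-norms of the Gaussians scale with $\lambda$. Since $\|B_M(f,g)\|_{p_3}\le\|M\|_{p_1,p_2,p_3}\|f\|_{p_1}\|g\|_{p_2}$, I must control the pointwise value $|B_M(f,g)(0)|$ by $\|B_M(f,g)\|_{p_3}$; here I would use that $B_M(f,g)$ is a fixed smooth function whose $L^{p_3}$-norm dominates its value at a point only after suitable normalization, so it is cleaner to integrate against a test function $h$ via Proposition \ref{dual}, or to rescale so the evaluation point is replaced by an average. Computing the $L^{p_i}$-norms of the $\lambda$-dependent Gaussians gives factors that are powers of $\lambda$, and collecting all the $\lambda$-exponents — from $\|f\|_{p_1}$, $\|g\|_{p_2}$, the normalization of the $p_3$-side, and the factor $1/\lambda$ above — should produce exactly the exponent $\frac{1}{q}-1=\frac{1}{p_1}+\frac{1}{p_2}-\frac{1}{p_3}-1$. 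The main obstacle I anticipate is bookkeeping: passing from the pointwise value $B_M(f,g)(0)$ to something genuinely controlled by $\|B_M(f,g)\|_{p_3}$ (rather than a sup-norm), and verifying that all the scaling exponents in $\lambda$ combine correctly to give $\frac{1}{q}-1$. A convenient way to handle the first issue is to pair with a third Gaussian $h$ and invoke the duality formulation of Proposition \ref{dual}, which replaces evaluation at $0$ with an integral that is directly estimable and whose own $\lambda$-scaling can be matched to close the argument.
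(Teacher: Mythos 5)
Your starting point is exactly the paper's: test $B_M$ on Gaussians through formula (\ref{exp1}), so that $\hat f(\frac{u+v}{2})\hat g(\frac{u-v}{2})$ splits as $e^{-\lambda^2u^2}e^{-\lambda^2v^2}$ and the quantity $I(\lambda)=\int_\R e^{-\lambda^2v^2}M(v)\,dv$ factors out. The gap is in how you close the estimate, and it is self-inflicted: by setting $x=0$ you discard the one piece of information that finishes the proof in one line. Keeping the oscillatory factor, the $u$-integral is just the Fourier transform of a Gaussian, so
$$B_M(G_\lambda,G_\lambda)(x)=\frac{C}{\lambda}\,e^{-\pi^2x^2/\lambda^2}\,I(\lambda),$$
i.e.\ the output is an \emph{explicit} multiple of a fixed Gaussian profile. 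Its $L^{p_3}$-norm is therefore computed exactly, $\|B_M(G_\lambda,G_\lambda)\|_{p_3}=C|I(\lambda)|\,\lambda^{\frac{1}{p_3}-1}$, and comparing with $\|M\|_{p_1,p_2,p_3}\|G_\lambda\|_{p_1}\|G_\lambda\|_{p_2}=C\|M\|_{p_1,p_2,p_3}\lambda^{\frac{1}{p_1}-1}\lambda^{\frac{1}{p_2}-1}$ gives the lemma at once, for every $0<p_3\le\infty$. This is the paper's proof, and every ingredient of it is already present in your computation; no issue of controlling a point value by a norm ever arises.

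Your proposed patch --- pairing with a third Gaussian $h$ via Proposition \ref{dual} --- can indeed be made to work: the trilinear form becomes $\frac{1}{2}\bigl(\int_\R e^{-\lambda^2u^2}\hat h(u)\,du\bigr)\,I(\lambda)$, and with $\hat h$ Gaussian one gets $\lambda^{-1}|I(\lambda)|\le C\|M\|_{p_1,p_2,p_3}\lambda^{(\frac{1}{p_1}-1)+(\frac{1}{p_2}-1)+(\frac{1}{p_3'}-1)}$, which is the claimed bound since $\frac{1}{p_3'}=1-\frac{1}{p_3}$. But this route has a real cost: Proposition \ref{dual} is stated only for $1\le p_3\le\infty$, and duality with $L^{p_3'}$ is simply unavailable when $p_3<1$. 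The lemma, however, is stated for $M\in\mul$ where the definition allows $0<p_3\le\infty$, and it is applied in Theorem \ref{ci} with no restriction to $p_3\ge 1$ --- the range $2/3<p_3<1$ being precisely the interesting one for the bilinear Hilbert transform. So as written your argument proves a strictly weaker statement. Finally, note that the exponent bookkeeping, which you leave as a promise (``should produce exactly the exponent $\frac{1}{q}-1$''), is the actual content of the lemma; it does check out, but it has to be carried out for the proof to be complete.
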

\begin{proof} Let $\lambda>0$ and denote $G_\lambda$ such that
$\hat G_\lambda(\xi)=e^{-2\lambda^2 \xi^2}$. Using (\ref{exp1})
one concludes that \ba B_M(G_\lambda,
G_\lambda)(x)&=&\frac{1}{2}\int_{\R^2} e^{-\lambda^2v^2}
e^{-\lambda^2u^2} M(v)e^{2\pi iu x}du dv\\
 &=&\frac{1}{2}(\int_{\R} e^{-\lambda^2v^2} M(v) dv)
(\frac{1}{\lambda}\int_\R e^{-u^2} e^{2\pi iu\frac{x}{\lambda}}du) \\
&=&C \frac{1}{\lambda}e^{-\pi^2 \frac{x^2}{\lambda^2}}(\int_{\R}
e^{-\lambda^2v^2} M(v) dv).\ea

Since $\|G_\lambda\|_p= C_p \lambda^{\frac{1}{p} -1}$ and  $M\in
\mul$ one gets that
$$\|B_M(G_\lambda, G_\lambda)\|_{p_3}= C|\int_{\R} e^{-\lambda^2v^2} M(v)
dv|\lambda^{\frac{1}{p_3} -1}\le C\|M\|_{p_1,p_2,p_3}
\lambda^{\frac{1}{p_1} -1}\lambda^{\frac{1}{p_2} -1}.$$

Therefore $|\int_\R e^{-\lambda^2 \xi^2}M(\xi) d\xi|\le C
\|M\|_{p_1,p_2,p_3}\lambda^{\frac{1}{q}-1}$.
\end{proof}

\begin{teor} \label{ci} If there exists   a non-zero continuous and integrable function $M$ belonging to $\mul$ then
$$\frac{1}{p_3}\le \frac{1}{p_1}+\frac{1}{p_2}\le  \frac{1}{p_3}+1.$$
\end{teor}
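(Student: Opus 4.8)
The plan is to translate the desired chain of inequalities into the single statement $0\le \frac{1}{q}\le 1$, where $\frac{1}{q}=\frac{1}{p_1}+\frac{1}{p_2}-\frac{1}{p_3}$ as in Lemma \ref{ml}: the left inequality $\frac{1}{p_3}\le \frac{1}{p_1}+\frac{1}{p_2}$ is exactly $\frac{1}{q}\ge 0$, and $\frac{1}{p_1}+\frac{1}{p_2}\le \frac{1}{p_3}+1$ is exactly $\frac{1}{q}\le 1$. The whole argument then rests on extracting these two bounds from the Gaussian estimate
$$\Big|\int_\R e^{-\lambda^2\xi^2}M(\xi)\,d\xi\Big|\le C\|M\|_{p_1,p_2,p_3}\,\lambda^{\frac{1}{q}-1}$$
of Lemma \ref{ml}, by letting $\lambda\to 0^+$ and $\lambda\to\infty$ respectively.

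The two regimes produce competing rates. As $\lambda\to0^+$, dominated convergence (with dominating function $|M|\in L^1(\R)$) shows that $\int_\R e^{-\lambda^2\xi^2}M(\xi)\,d\xi\to \int_\R M(\xi)\,d\xi$; so if $\frac{1}{q}-1>0$ the estimate would force $\int_\R M=0$. As $\lambda\to\infty$, rescaling $\xi=s/\lambda$ and using the continuity of $M$ at $0$ together with $M\in L^1$ gives $\lambda\int_\R e^{-\lambda^2\xi^2}M(\xi)\,d\xi\to \sqrt{\pi}\,M(0)$, i.e.\ the integral decays like $\sqrt{\pi}\,M(0)/\lambda$; so if $\frac{1}{q}<0$, whence $\frac{1}{q}-1<-1$, the estimate would force $M(0)=0$. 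The obstacle is thus apparent: applied to $M$ itself the lemma yields the conclusion only when $\int_\R M\ne0$ and $M(0)\ne0$, which need not hold.

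The remedy is to exploit the translation and modulation invariance of the multiplier norm. From (\ref{mod1}) one has $B_M(M_yf,M_{-y}g)=B_{\tau_{2y}M}(f,g)$, and since modulations are isometries on every $L^p(\R)$ this gives $\tau_zM\in\mul$ with $\|\tau_zM\|_{p_1,p_2,p_3}=\|M\|_{p_1,p_2,p_3}$ for every $z\in\R$; likewise (\ref{tras1}) yields $M_yM\in\mul$ with the same norm. Both operations preserve continuity and integrability. Since $M$ is a non-zero continuous function there is $z_0$ with $M(z_0)\ne0$; applying the lemma to $\tau_{-z_0}M$, whose value at $0$ is $M(z_0)\ne0$, and letting $\lambda\to\infty$, the left-hand side is of exact order $\lambda^{-1}$, which against $\lambda^{\frac{1}{q}-1}$ forces $\frac{1}{q}\ge0$. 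Since $M$ is a non-zero element of $L^1(\R)$ its Fourier transform is not identically zero, so there is $y_0$ with $\hat M(-y_0)\ne0$; applying the lemma to $M_{y_0}M$ and letting $\lambda\to0^+$, where the integral tends to $\int_\R e^{2\pi i y_0\xi}M(\xi)\,d\xi=\hat M(-y_0)\ne0$, forces $\frac{1}{q}\le1$. Combining the two bounds gives $0\le\frac{1}{q}\le1$, which is the assertion. The only genuinely delicate point is the $\lambda\to\infty$ asymptotic, which I would justify by writing $\lambda\int_\R e^{-\lambda^2\xi^2}(M(\xi)-M(0))\,d\xi$ and splitting into a neighbourhood of the origin, where continuity makes the integrand uniformly small, and its complement, where the factor $\lambda e^{-\lambda^2\delta^2/2}$ decays to $0$ and $M\in L^1$ controls the remainder.
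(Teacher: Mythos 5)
Your proof is correct and follows essentially the same route as the paper: apply Lemma \ref{ml} to translates $\tau_{z}M$ (via (\ref{mod1})) and modulates $M_yM$ (via (\ref{tras1})), then read off $\frac{1}{q}\ge 0$ from the $\lambda\to\infty$ Gaussian approximate-identity asymptotics and $\frac{1}{q}\le 1$ from the $\lambda\to 0^+$ dominated-convergence limit. The only differences are cosmetic: the paper argues contrapositively (showing $M(2y)=0$ for all $y$, resp. $\hat M\equiv 0$) where you derive a contradiction at a single well-chosen point, and you spell out the splitting argument for the $\lambda\to\infty$ limit that the paper leaves implicit.
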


\begin{proof} Assume first that  $\frac{1}{p_1}+\frac{1}{p_2}<\frac{1}{p_3}$.
Use Lemma \ref{ml} applied to $\tau_{-2y}M$ for any $y\in \R$
together with (\ref{tras1}) to obtain
$$|\lambda\int_\R e^{-\lambda^2 \xi^2}M(\xi+2y) d\xi|\le C
\|M\|_{p_1,p_2,p_3}\lambda^{\frac{1}{q}}.$$ Therefore, using the
continuity of $M$  and $q<0$ one gets $$\lim_{\lambda\to
\infty}|\lambda\int_\R e^{-\lambda^2 \xi^2}M(\xi+2y)
d\xi|=|M(2y)|= 0.$$ Hence $M=0$.

Assume now that $\frac{1}{p_1}+\frac{1}{p_2}-\frac{1}{p_3}>1$.
Using again Lemma \ref{ml}, applied to $M_{y}M$, together with
(\ref{mod1}) we obtain
$$|\int_\R e^{-\lambda^2 \xi^2}M(\xi) e^{2\pi i y\xi}d\xi|\le C
\|M\|_{p_1,p_2,p_3}\lambda^{\frac{1}{q}-1}.$$ Therefore, taking
limits again as $\lambda\to 0$, since  $1/q-1>0$ we get $|\hat
M(y)|=0$. Hence $M=0$.
\end{proof}

\begin{coro}(see \cite[Prop 3.1]{V2}) Let $ p_3\ge 1$ such that
$\frac{1}{p_1}+\frac{1}{p_2}<\frac{1}{p_3}$ or
$\frac{1}{p_1}+\frac{1}{p_2}>\frac{1}{p_3}+1$. Then $\mul=\{0\}$.
\end{coro}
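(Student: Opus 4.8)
The plan is to reduce both cases to Theorem \ref{ci} by a regularization argument. Observe that the hypothesis of the corollary is precisely the negation of the conclusion of Theorem \ref{ci}: in the first case the inequality $\frac{1}{p_3}\le\frac{1}{p_1}+\frac{1}{p_2}$ fails, while in the second case $\frac{1}{p_1}+\frac{1}{p_2}\le\frac{1}{p_3}+1$ fails. Hence, reading Theorem \ref{ci} contrapositively, under either hypothesis there is \emph{no} non-zero continuous and integrable function lying in $\mul$. (In the first case this also follows directly from the earlier vanishing theorem.) So it suffices to show that if $M\in\mul$ is not zero almost everywhere, one can manufacture from $M$ a non-zero continuous integrable element of $\mul$, contradicting the above.

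To produce such an element I would regularize $M$ twice, using Proposition \ref{propi2} (which is available since $p_3\ge1$, as assumed). First, pick $\psi\in C_{00}(\R)$; since $M\in L^1_{loc}(\R)$, the convolution $\psi*M$ is continuous, and by Proposition \ref{propi2}(a) it belongs to $\mul$. Next choose $\phi\in L^1(\R)$ with $\hat\phi$ a smooth bump of compact support (e.g. take $\hat\phi$ to be any prescribed $C_{00}(\R)$ bump and let $\phi$ be its inverse Fourier transform, which is Schwartz and hence in $L^1(\R)$). Then $N:=\hat\phi\,(\psi*M)$ is continuous and compactly supported, therefore integrable, and by Proposition \ref{propi2}(b) it lies in $\mul$. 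Thus $N$ is a continuous integrable element of $\mul$, so by the contrapositive of Theorem \ref{ci} we must have $N\equiv0$.

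Finally, I would let the regularizing data vary to conclude $M=0$. Fix $\psi$. Since $\psi*M$ is continuous and, for every point $x_0$, one may choose the bump $\hat\phi$ so that $\hat\phi(x_0)\neq0$, the identity $\hat\phi\,(\psi*M)\equiv0$ forces $(\psi*M)(x_0)=0$; hence $\psi*M\equiv0$. Now taking $\psi=\psi_\e$ to be an approximate identity in $C_{00}(\R)$ and using that $\psi_\e*M\to M$ in $L^1_{loc}(\R)$ (equivalently at almost every Lebesgue point) yields $M=0$ a.e., which is the desired conclusion.

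The genuinely substantial content here is entirely contained in Theorem \ref{ci}, which we are invoking; the corollary itself is a soft reduction. The step requiring the most care is the regularization: verifying that convolution with a $C_{00}(\R)$ function produces a continuous function and that subsequent multiplication by a compactly supported $\hat\phi$ produces an integrable one, so that Theorem \ref{ci} genuinely applies, together with the approximate-identity limit that recovers $M$ itself.
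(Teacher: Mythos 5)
Your proof is correct, and its skeleton is the same as the paper's: regularize $M$ by convolution with a nice function (Proposition \ref{propi2}(a)), invoke Theorem \ref{ci} contrapositively to kill the regularization, and recover $M$ in the limit. The difference is your second regularization step, and it is worth pointing out that this step is not cosmetic. The paper applies Theorem \ref{ci} directly to $\phi*M$, which is continuous but need \emph{not} be integrable, since $M$ is only locally integrable (if $M$ were a nonzero constant, $\phi*M$ would be a nonzero constant); yet integrability is genuinely used in the proof of Theorem \ref{ci} -- the case $\frac{1}{p_1}+\frac{1}{p_2}-\frac{1}{p_3}>1$ passes to $\hat M$, and the other case needs the $L^1$ norm to control the tails in the Gaussian limit. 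Your multiplication by a compactly supported bump $\hat\phi$ via Proposition \ref{propi2}(b) produces a continuous \emph{and} compactly supported, hence integrable, element of $\mul$, so Theorem \ref{ci} applies without any caveat, and varying the bump then forces $\psi*M\equiv 0$ pointwise. In other words, your version fills a gap that the paper's one-line argument glosses over. One small imprecision: in your parenthetical you allow $\hat\phi$ to be ``any prescribed $C_{00}(\R)$ bump,'' but for its inverse Fourier transform to be Schwartz (hence in $L^1(\R)$) you need the bump to be $C^\infty$ with compact support, as in your main sentence; a merely continuous compactly supported bump may have a non-integrable inverse transform. With that word fixed, the argument is complete.
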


\begin{proof} Let $M\in \mul$. From Proposition \ref{propi2} we have that $\phi*M \in \mul$ for any $\phi$ compactly supported and continuous. Now use
Theorem \ref{ci}
 to conclude that $\phi*M =0$ for any  compactly supported and continuous $\phi$.
This implies that $M=0$.
\end{proof}

Let us now use  some interpolation methods to get more examples of
multipliers in $\mul$. First note that, selecting $\alpha=1$ and
$\beta=-1$ in Proposition \ref{meas} we obtain the following
simple example.
\begin{prop}\label{m1} If $\mu\in M(\R)$ then $M=\hat \mu \in
\mul$ for $\frac{1}{p_1}+\frac{1}{p_2}=\frac{1}{p_3}\le 1$ and
$\|M\|\le \|\mu\|_1$.
\end{prop}

\begin{teor}  Let
$\frac{1}{p_3}\le\frac{1}{p_1}+\frac{1}{p_2}\le \min\{2,
\frac{1}{p_3}+1\}$. If $M\in L^1(\R)$ and $M=\hat K$ for some
$K\in L^q(\R)$ where
$\frac{1}{p_1}+\frac{1}{p_2}-\frac{1}{p_3}=1-\frac{1}{q}$ then
$M\in \mul$  with $\|M\|_{p_1,p_2,p_3}\le C\|K\|_q$.
\end{teor}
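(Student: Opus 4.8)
The plan is to reduce the statement to an estimate for the bilinear convolution operator $C_K$ and then obtain that estimate by complex interpolation between the two extreme cases $K\in L^1(\R)$ and $K\in L^\infty(\R)$.

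Since $M=\hat K\in L^1(\R)$, Proposition \ref{igual} identifies $B_M$ with $C_K$, so that
\[
B_M(f,g)(x)=\int_\R f(x-t)\,g(x+t)\,K(t)\,dt ,
\]
and it suffices to bound $\|C_K(f,g)\|_{p_3}$ by $C\|K\|_q\|f\|_{p_1}\|g\|_{p_2}$. I single out two endpoints. If $q=1$, then $\frac1{p_1}+\frac1{p_2}=\frac1{p_3}$ and Proposition \ref{meas} applied to $d\mu=K\,dt$ (with $\alpha=1$, $\beta=-1$) gives $\|C_K(f,g)\|_{p_3}\le\|K\|_1\|f\|_{p_1}\|g\|_{p_2}$. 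If $q=\infty$, then $\frac1{p_1}+\frac1{p_2}=1+\frac1{p_3}$; the pointwise bound $|C_K(f,g)(x)|\le\|K\|_\infty\,(|f|*|g|)(2x)$ together with Young's inequality yields $\|C_K(f,g)\|_{p_3}\le 2^{-1/p_3}\|K\|_\infty\|f\|_{p_1}\|g\|_{p_2}$, valid whenever $p_1,p_2,p_3\ge1$.

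For the general case set $\theta=1-\frac1q=\frac1{p_1}+\frac1{p_2}-\frac1{p_3}$; the constraints in the statement guarantee $0\le\theta\le1$ (equivalently $1\le q\le\infty$). I would introduce the analytic family $K_z(t)=|K(t)|^{q(1-z)}\frac{K(t)}{|K(t)|}$ on the strip $0\le\operatorname{Re} z\le1$, which satisfies $K_\theta=K$, $\|K_{it}\|_1=\|K\|_q^{q}$ and $\|K_{1+it}\|_\infty\le1$, and apply the complex (Stein) interpolation theorem for analytic families of bilinear operators to $z\mapsto C_{K_z}$. On the line $\operatorname{Re} z=0$ I use an $L^1$-type configuration $(a_1,a_2,a_3)$ with $\frac1{a_1}+\frac1{a_2}=\frac1{a_3}$, whose norm is controlled by $\|K_{it}\|_1=\|K\|_q^{q}$; on the line $\operatorname{Re} z=1$ I use a Young-type configuration $(b_1,b_2,b_3)$ with $\frac1{b_1}+\frac1{b_2}=1+\frac1{b_3}$, whose norm is controlled by $\|K_{1+it}\|_\infty\le1$. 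Choosing these so that $\frac1{p_i}=(1-\theta)\frac1{a_i}+\theta\frac1{b_i}$, the endpoint exponent relations automatically produce $\frac1{p_1}+\frac1{p_2}-\frac1{p_3}=\theta$, and the interpolated norm is at most $(\|K\|_q^{q})^{1-\theta}\cdot1^{\theta}=\|K\|_q^{q(1-\theta)}=\|K\|_q$, which is exactly the desired $\|M\|_{p_1,p_2,p_3}\le C\|K\|_q$.

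The crux, and the step I expect to cost the most work, is to verify that admissible endpoint exponents exist. Writing $s_i=\frac1{p_i}$, $x_i=\frac1{a_i}$, $y_i=\frac1{b_i}$, one needs $x_i,y_i\in[0,1]$ with $(1-\theta)x_i+\theta y_i=s_i$ for $i=1,2$, and then $x_3=x_1+x_2$, $y_3=y_1+y_2-1$; the identity $s_1+s_2-s_3=\theta$ makes the $i=3$ interpolation equation automatic, so only $x_1+x_2\le1$ and $y_1+y_2\ge1$ must be arranged. Pushing each $(x_i,y_i)$ to the corner of $[0,1]^2$ with smallest $x_i$ (hence largest $y_i$) makes $y_1+y_2\ge1$ hold unconditionally, while $x_1+x_2\le1$ reduces, after a short computation, to $s_3\le1$. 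Thus the Banach interpolation above delivers the theorem whenever $p_3\ge1$. The genuinely delicate point is the range $p_3<1$, which the hypothesis $\frac1{p_1}+\frac1{p_2}\le\min\{2,\frac1{p_3}+1\}$ does allow (the term $2$ becoming the active bound): here $L^{p_3}$ is only a quasi-Banach space, no Banach endpoint can reach $s_3>1$ since $s_3=(1-\theta)x_3+\theta y_3\le1$ when $x_3,y_3\le1$, and one must instead invoke the quasi-Banach version of multilinear complex interpolation, together with endpoint estimates for $C_K$ valid below exponent $1$, to close the argument.
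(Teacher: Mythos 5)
Your argument is correct on the range it actually proves, namely $p_3\ge 1$, but it reaches the theorem by a genuinely different interpolation mechanism than the paper's. The common part is the reduction $B_M=C_K$ via Proposition \ref{igual} and the two building blocks: the $L^1$-kernel estimate (Propositions \ref{meas}/\ref{m1}) and the $L^\infty$-kernel Young estimate. The paper, however, never moves the exponents $p_1,p_2$: it freezes $f\in L^{p_1}(\R)$ and $g\in L^{p_2}(\R)$, regards $K\mapsto T(K,f,g)=C_K(f,g)$ as a \emph{linear} map, and applies scalar Riesz--Thorin in the $K$-slot alone, in two cases: for $\frac{1}{p_1}+\frac{1}{p_2}\le 1$ it interpolates the endpoint $L^1\to L^{p}$ (Proposition \ref{m1}, $\frac1p=\frac1{p_1}+\frac1{p_2}$) against $L^{p'}\to L^\infty$ (three-function H\"older), and for $1<\frac{1}{p_1}+\frac{1}{p_2}\le 2$ it interpolates $L^\infty\to L^{p}$ (Young, $\frac1p=\frac1{p_1}+\frac1{p_2}-1$) against $L^{p'}\to L^1$, the latter obtained from the duality identity $\langle T(K,f,g),h\rangle=\langle T(h,\bar f,g),K\rangle$. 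You instead fix $K$, embed it in the analytic family $K_z$, and invoke Stein's interpolation theorem for bilinear operators, moving all three exponents simultaneously; the price is the admissible-exponent verification, which you carry out correctly (maximizing $y_1,y_2$ always yields $y_1+y_2\ge 1$, and in the worst case $s_1,s_2>\theta$ one gets $x_1+x_2=\frac{s_3-\theta}{1-\theta}\le 1$ exactly when $s_3\le 1$). The paper's device buys elementarity --- only linear Riesz--Thorin, no analytic families, no exponent geometry --- at the cost of a case split, a H\"older endpoint and a duality trick; yours is a single uniform interpolation but needs the heavier Stein machinery. Finally, the point you flag as unresolved, $p_3<1$, is not a deficiency relative to the paper: its own proof has exactly the same limitation, since in its second case it interpolates between the Banach targets $L^{p}$ (with $\frac1p=\frac1{p_1}+\frac1{p_2}-1\le 1$) and $L^1$, hence also reaches only $\frac{1}{p_3}\le 1$, even though the statement formally allows $\frac{1}{p_3}$ as large as $\frac{1}{p_1}+\frac{1}{p_2}\le 2$.
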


\begin{proof} Consider the trilinear form $$T(K, f,g)=\int_\R
f(x-t)g(x+t)K(t)dt.$$ From Proposition \ref{igual} we have
$B_M(f,g)= T(K,f,g)$ for $M=\hat K$. Now use Proposition \ref{m1}
to conclude  that $T$ is bounded in $L^1(\R)\times
L^{q_1}(\R)\times L^{q_2}(\R) \to L^{s_1}(\R)$ where
$\frac{1}{q_1}+\frac{1}{q_2}=\frac{1}{s_1}\le 1$ and it has norm
bounded by $1$.

Assume first that $\frac{1}{p_1}+\frac{1}{p_2}\le 1$.  Hence $T$
is bounded in $L^1(\R)\times L^{p_1}(\R)\times L^{p_2}(\R) \to
L^{p}(\R)$ for $\frac{1}{p_1}+\frac{1}{p_2}= \frac{1}{p}$.

On the other hand, using H\"older's inequality
$$\sup_{x}|\int_\R
f(x-t)g(x+t)K(t)dt|\le \|f\|_{p_1}\|g\|_{p_2}\|K\|_{p'}.$$ This
shows that $T$ is also bounded in $L^{p'}(\R)\times
L^{p_1}(\R)\times L^{p_2}(\R) \to L^{\infty}(\R)$. Therefore, by
interpolation, selecting $0<\theta<1$ such that
$\frac{1}{p_3}=\frac{1-\theta}{p}$, one obtains that $T$ is
bounded in $L^{q}(\R)\times L^{p_1}(\R)\times L^{p_2}(\R) \to
L^{p_3}(\R)$ for
$\frac{1}{p_1}+\frac{1}{p_2}-\frac{1}{p_3}=1-\frac{1}{q}$.

Assume now that $1<\frac{1}{p_1}+\frac{1}{p_2}\le 2$.

Using that $\int_\R f(x-t)g(x+t)dt= f*g(2x)$, Young's inequality
implies that $$\|\int_\R f(x-t)g(x+t)K(t)dt\|_{r_3}\le
\|K\|_\infty \| D^\infty_{1/2}(|f|*|g|)\|_{r_3}\le
C\|f\|_{r_1}\|g\|_{r_2} \|K\|_\infty$$ whenever
$\frac{1}{r_1}+\frac{1}{r_2}\ge 1$ and
$\frac{1}{r_1}+\frac{1}{r_2}- 1=\frac{1}{r_3}$.

Hence $T$ is bounded in $L^{\infty}(\R)\times L^{p_1}(\R)\times
L^{p_2}(\R) \to L^{p}(\R)$ where
$\frac{1}{p_1}+\frac{1}{p_2}-1=\frac{1}{p}\le 1$.

Using duality, $\langle T(K,f,g), h\rangle = \langle T(h,\bar
f,g), K\rangle$, where $\bar f(x)=f(-x$, that is
$$\int_{\R^2} f(x-t)g(x+t)K(t)h(x) dtdx = \int_\R  (\int_\R \bar f(t-x)g(x+t)h(x) dx) K(t)dt.$$
Therefore $T$ is also bounded in $L^{p'}(\R)\times
L^{p_1}(\R)\times L^{p_2}(\R) \to L^{1}(\R)$.

Select $0\le\theta\le 1$ such that
$\frac{1}{p_3}=\frac{1}{p}+\frac{\theta}{p'}$. Now using
interpolation $T$ will be bounded in $L^{q}(\R)\times
L^{p_1}(\R)\times L^{p_2}(\R) \to L^{p_3}(\R)$ for
$\frac{1}{q}=\frac{\theta}{p'}=\frac{1}{p_3}-\frac{1}{p}=\frac{1}{p_3}-\frac{1}{p_1}-\frac{1}{p_2}+1$.

\end{proof}

\end{document}